\documentclass{amsart}

\usepackage[T1]{fontenc}
\usepackage[utf8]{inputenc}
\usepackage{lmodern}
\usepackage[provide=*,english]{babel}
\usepackage{amsmath,amssymb,amsthm,mathtools}
\usepackage{microtype}
\usepackage{float}
\microtypesetup{expansion=false}
\usepackage[colorlinks=true,linkcolor=blue,citecolor=blue,urlcolor=blue]{hyperref}

\makeatletter
\def\@@and{and}

\@namedef{subjclassname@2020}{Mathematics Subject Classification
\textup{2020}}

\makeatother

\numberwithin{equation}{section}

\newtheorem{theorem}{Theorem}[section]

\newtheorem{lemma}[theorem]{Lemma}
\newtheorem{maintheorem}{Theorem}

\theoremstyle{definition}
\newtheorem{definition}[theorem]{Definition}
\newtheorem{example}[theorem]{Example}
\theoremstyle{remark}
\newtheorem{remark}[theorem]{Remark}

\DeclareMathOperator{\diam}{diam}
\DeclareMathOperator{\inj}{inj}
\DeclareMathOperator{\dis}{dis}

\DeclareMathOperator{\Lip}{Lip}
\usepackage{tikz}
\usetikzlibrary{arrows.meta,calc}

\begin{document}

\title[Topological entropy and factor persistence]
{Flexibility of topological entropy and factor persistence
in the $C^0$-Gromov Hausdorff topology}

\author{Richard Cubas}
\address{Universidad Científica del Sur, Lima, Peru}
\email{rcubas@cientifica.edu.pe}

\author{Jorge Cris\'ostomo}
\address{Universidad Nacional Mayor de San Marcos, Lima, Peru}
\email{jcrisostomop@unmsm.edu.pe}


\begin{abstract}
We prove that every surjective topological dynamical system on a
compact metric space can be approximated in the $C^0$-Gromov--Hausdorff
topology by homeomorphisms with any prescribed topological entropy
in $[0,+\infty]$, even when simultaneous approximation of its sets
of invariant and ergodic measures is required. We also establish
a rigidity result under uniform geometric bounds in a fixed
dimension: for a homeomorphism of a closed, connected Riemannian
manifold, topological stability in the sense of Gromov--Hausdorff
guarantees its persistence as a factor of sufficiently close
perturbations within the same geometric class. This persistence
prevents topological entropy from decreasing and ensures that
every invariant or ergodic measure of the original system admits
a lift of the same type to the perturbed system.
\end{abstract}

\maketitle
\section{Introduction}

The Gromov--Hausdorff distance provides a natural framework for comparing compact metric spaces without requiring them to be realized as subspaces of a common ambient space; see \cite{burago2001,gromov2007}. To extend this perspective to dynamical systems defined on different metric spaces, Arbieto and Morales \cite{arbieto2017} introduced the $C^0$ Gromov--Hausdorff distance, which we denote by $d_{GH^0}$. This distance allows one to compare both the underlying metric spaces and the dynamical systems acting on them through almost isometric maps that approximately intertwine the corresponding dynamics. Building on this approach, several notions of distance between dynamical systems have been developed, including equivariant and measurable formulations. Related questions concerning variational principles, topological stability relative to probability measures, and the persistence of dynamical properties have also been investigated; see \cite{chung2020,khanMoralesDas2024,leeMoralesNguyen2024,moralesTran2026}.

In this context, a natural question concerns the behavior of topological entropy under perturbations in the $GH^0$ topology. Cubas \cite{cubas2018} showed that topologically transitive homeomorphisms can be approximated in this topology by systems with zero topological entropy. Subsequently, Jung \cite{jung2019} proved that systems consisting of a single periodic orbit are dense, with respect to the $GH^0$ topology, in the class of chain-transitive maps. These results motivate a more general problem: determining which entropy values can be realized by arbitrarily small perturbations of a given system. It is also natural to ask whether such approximations can be achieved while simultaneously controlling the sets of invariant and ergodic measures.

The first main result of this paper provides a precise characterization of continuous maps admitting approximations with arbitrarily prescribed topological entropy. We prove that a continuous map on a compact metric space can be approximated in the $GH^0$ topology by homeomorphisms with any prescribed topological entropy $t\in[0,+\infty]$ if and only if it is surjective. Moreover, for surjective maps, such approximations can be constructed while simultaneously controlling the sets of invariant and ergodic measures.

The proof is based on the construction of a countable compact model with zero topological entropy that approximates both the original dynamics and its sets of invariant and ergodic measures. The desired approximation is then obtained by taking the product of this model with an auxiliary system of arbitrarily small diameter and prescribed entropy. Measures defined on different spaces are compared by pushing them forward to the original space through continuous almost isometric maps.

The second main result  establishes a rigidity phenomenon that contrasts with the flexibility described above. We consider perturbations within a class of closed, connected Riemannian manifolds of fixed dimension, with uniformly bounded sectional curvature and diameter, and injectivity radius uniformly bounded away from zero. We show that if the original homeomorphism is topologically $GH^0$-stable, then it persists as a topological factor of every sufficiently close system within this class. Consequently, the topological entropy of the perturbed system is at least that of the original system. Furthermore, every invariant or ergodic measure of the original system admits an invariant or ergodic lift, respectively, to the perturbed system.

The main difficulty lies in establishing the surjectivity of the semiconjugacy provided by $GH^0$-stability. The geometric assumptions yield a uniform positive lower bound on volume and, after a suitable rescaling of the metrics, allow us to apply the homotopy stability theorem of Mitsuishi and Yamaguchi \cite[Theorem~1.2]{mitsuishiYamaguchi2019}. This result allows us to approximate the semiconjugacy by a Lipschitz homotopy equivalence. Uniform closeness ensures that the two maps are homotopic, and an argument based on top-dimensional homology then establishes surjectivity.

Taken together, these results show that the behavior of topological entropy in the $GH^0$ topology depends essentially on the restrictions imposed on the phase spaces. While perturbations on arbitrary compact metric spaces allow any prescribed entropy value to be realized near a surjective map, uniform geometric constraints, combined with $GH^0$-stability, guarantee the persistence of topological factors and prevent entropy from decreasing. Thus, entropy flexibility and geometric rigidity emerge as two complementary aspects of dynamics in the $C^0$ Gromov--Hausdorff topology.

The paper is organized as follows. Section~\ref{sec:preliminares} recalls the basic definitions and states our main results. In Section~\ref{sec:realizacion}, we construct the countable model and prove Theorem~\ref{thm:A}. Section~\ref{sec:rigidez} contains the measure-lifting lemma and the proof of Theorem~\ref{thm:B}.

\section{Basic Definitions and Results} \label{sec:preliminares}
All metric spaces are assumed to be compact and nonempty.
\subsection{Entropy flexibility}
 If $i:X\to Y$ is a map between metric spaces,
its distortion is
$$
\dis(i)=\sup_{x,x'\in X}
\left|d_Y(i(x),i(x'))-d_X(x,x')\right|.
$$
We do not require $i$ to be continuous. We say that $i$ is an
$\varepsilon$-isometry if
$$
\dis(i)<\varepsilon
\qquad\text{and}\qquad
\sup_{y\in Y}d_Y(y,i(X))<\varepsilon.
$$

Let $f:X\to X$ and $g:Y\to Y$ be continuous maps.
We use the symmetric formulation
\begin{align*}
d_{GH^0}(f,g)=\inf\bigl\{\varepsilon>0&:
 \ \text{there are $\varepsilon$-isometries }i:X\to Y,
j:Y\to X\text{ such that}\\
&\sup_{x\in X}d_Y(g(i(x)),i(f(x)))<\varepsilon \text{ and } \; \sup_{y\in Y}d_X(j(g(y)),f(j(y)))<\varepsilon
\bigr\}.
\end{align*}
See \cite{arbieto2017,chung2020,moralesTran2026}
for this distance and related formulations.
Despite its traditional name, $d_{GH^0}$ is not used here as an
ordinary metric: in general, a triangle inequality
with coefficient $2$ holds; see \cite[Theorem~1]{arbieto2017}.
Our arguments require only the neighborhoods defined by the
inequality $d_{GH^0}(f,g)<\varepsilon$.

\begin{definition}\label{def:estabilidad_GH}
A homeomorphism $f:X\to X$ is \emph{topologically
$GH$-stable} if for every $\varepsilon>0$ there exists $\eta>0$ such that, for every
homeomorphism $g:Y\to Y$ with $d_{GH^0}(f,g)<\eta$, there is a continuous $\varepsilon$-isometry $h:Y\to X$ satisfying
$$
f\circ h=h\circ g.
$$
\end{definition}

We denote by $\mathbb{P}(X)$ the space of probability measures on
$X$. The bounded-Lipschitz distance is
$$
d_{\rm BL}(\mu,\nu)=
\sup\left\{
\left|\int_X\varphi\,d\mu-\int_X\varphi\,d\nu\right|:
\|\varphi\|_\infty\leq1,\ \Lip(\varphi)\leq1
\right\}.
$$
This distance metrizes the weak topology on $\mathbb{P}(X)$ and satisfies
$d_{\rm BL}\leq2$. For nonempty subsets $\mathcal A,\mathcal B$ of
$\mathbb{P}(X)$, we write the Hausdorff distance induced by the
bounded-Lipschitz metric as follows:
$$
d_H^{\rm BL}(\mathcal A,\mathcal B)=
\max\left\{
\sup_{\mu\in\mathcal A}\inf_{\nu\in\mathcal B}d_{\rm BL}(\mu,\nu),
\sup_{\nu\in\mathcal B}\inf_{\mu\in\mathcal A}d_{\rm BL}(\mu,\nu)
\right\}.
$$
When either set is not closed, this expression depends only on
the closures; we shall use it in this sense.

For a continuous map $f:X\to X$, we define
$$
\mathcal M_f(X)=\{\mu\in\mathbb{P}(X):f_*\mu=\mu\}
$$
and denote by $\mathbb{E}_f(X)$ the set of its ergodic elements.
The set $\mathcal M_f(X)$ is compact and convex, and its extreme points
are precisely the elements of $\mathbb{E}_f(X)$. For $x\in X$ and
$m\geq1$, the empirical measure of length $m$ is
$$
\mathsf e_m^f(x)=\frac1m\sum_{k=0}^{m-1}\delta_{f^k(x)}.
$$
We shall repeatedly use the inequality
$$
d_{\rm BL}\left(\sum_i t_i\mu_i,\sum_i t_i\nu_i\right)
\leq \sum_i t_i\,d_{\rm BL}(\mu_i,\nu_i),
$$
valid for $t_i\geq0$ with $\sum_i t_i=1$. It follows
directly from the definition of $d_{\rm BL}$, since for every
admissible test function $\varphi$,
$$
\left|
\int\varphi\,d\left(\sum_i t_i\mu_i-\sum_i t_i\nu_i\right)
\right|
\leq
\sum_i t_i
\left|\int\varphi\,d(\mu_i-\nu_i)\right|
\leq
\sum_i t_i\,d_{\rm BL}(\mu_i,\nu_i).
$$

If $q:Y\to X$ is continuous, we define

$$
\Delta_{\rm BL}^{q}(g,f)=
\max\left\{
d_H^{\rm BL}\bigl(q_*\mathcal M_g(Y),\mathcal M_f(X)\bigr),
d_H^{\rm BL}\bigl(q_*\mathbb{E}_g(Y),\mathbb{E}_f(X)\bigr)
\right\}.
$$

Our first result characterizes the surjectivity of a continuous map through \(C^0\)-Gromov--Hausdorff approximations by homeomorphisms with arbitrarily prescribed topological entropy, while simultaneously controlling the sets of invariant and ergodic measures. More precisely:

\begin{maintheorem}
\label{thm:A}
Let $f:X\to X$ be a continuous map of a compact metric
space. The following are equivalent:
\begin{enumerate}
\item $f$ is surjective.
\item For every $\varepsilon>0$ and every $t\in[0,+\infty]$, there exist a
compact metric space $Y$, a homeomorphism $g:Y\to Y$, and a
continuous $\varepsilon$-isometry $q:Y\to X$ such that
$$
d_{GH^0}(f,g)<\varepsilon,\qquad
h_{\rm top}(g)=t,\qquad
\Delta_{\rm BL}^{q}(g,f)<\varepsilon.
$$
\end{enumerate}
If $f$ is surjective, $Y$ can be chosen to be countable when $t=0$.
\end{maintheorem}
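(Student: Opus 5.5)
For (2)$\Rightarrow$(1), I will argue by contradiction. Suppose $f$ is not surjective. Then $f(X)$ is a proper compact subset, so there is $x_0\in X$ with $d(x_0,f(X))=3\delta>0$. Take $\varepsilon<\delta/4$ and let $g$, $i:X\to Y$, $j:Y\to X$ witness $d_{GH^0}(f,g)<\varepsilon$. Since $j$ has $\varepsilon$-dense image, pick $y$ with $d(j(y),x_0)<\varepsilon$. Because $g$ is a homeomorphism, $y=g(y')$ for some $y'$. Then $d(j(g(y')),f(j(y')))<\varepsilon$, so $x_0$ is within $2\varepsilon$ of $f(X)$, a contradiction. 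Only surjectivity of $g$ is used here.

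For (1)$\Rightarrow$(2), I will follow the strategy in the introduction.

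\emph{Step 1: a countable model.} Fix $\varepsilon$. Choose a finite $\varepsilon/10$-net $F\subset X$. Since $f$ is surjective, every point has a full backward orbit, so each $x\in X$ carries bi-infinite pseudo-orbits that genuinely follow $f$. I will build $Y_0$ as a countable compact space consisting of points $(x,n)$ that encode finitely many bi-infinite $f$-orbit segments. These segments start at net points and are glued at their ends so that they accumulate onto periodic points of a finite ``$\varepsilon$-pseudo-orbit'' graph on $F$. Concretely, I use sequences $y_n$, $n\in\mathbb Z$, with $y_{n+1}=f(y_n)$ for $|n|\le N$, converging to fixed points (periodic orbits) at $\pm\infty$. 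I realize these as subsets of $X\times\{\text{tags}\}$ with a metric $d_X+$(small tag metric), and $g_0$ is the shift along the tags, which is a homeomorphism. The map $q_0$ is projection to $X$, which is continuous and $\varepsilon$-isometric. The space is countable compact, so $h_{\rm top}(g_0)=0$.

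\emph{Step 2: measures.} Every invariant measure of $g_0$ lives on the finite limit periodic set, by Poincaré recurrence on a countable space. So $\mathbb E_{g_0}$ consists of periodic orbit measures, and these project to empirical measures of long genuine $f$-orbit segments, closed up with one small jump. To approximate $\mathcal M_f(X)$ and $\mathbb E_f(X)$, I choose finitely many ergodic $\mu_k$ forming an $\varepsilon$-net of $\mathbb E_f(X)$. Each $\mu_k$ has a generic point $x_k$ with $\mathsf e^f_m(x_k)$ close to $\mu_k$. I include periodic pseudo-orbits shadowing these long segments. Conversely, any long periodic $\varepsilon$-pseudo-orbit has empirical measure close to some invariant measure, and I need it to be close to an \emph{ergodic} one; this is the step I expect to be the main obstacle. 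To handle it, I will only allow loops built from one generic segment of some $x_k$, never concatenations. The Hausdorff closeness for $\mathcal M$ then follows by convexity, using the $d_{\rm BL}$ convex-combination inequality.

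\emph{Step 3: prescribed entropy.} Let $(Z,\sigma)$ be a homeomorphism with $h_{\rm top}=t$ and $\diam Z<\varepsilon/10$. For $t<\infty$, a rescaled subshift or product of full shifts with suitable entropy works, and a rescaled full shift on $[0,1]^{\mathbb Z}$ gives $t=\infty$. Set $Y=Y_0\times Z$ with the max metric, $g=g_0\times\sigma$, and $q=q_0\circ\pi_1$. Then $h_{\rm top}(g)=h_{\rm top}(g_0)+t=t$. The distortion grows by less than $\varepsilon/10$, so $d_{GH^0}(f,g)<\varepsilon$. Also $q_*\mathcal M_g=(q_0)_*\mathcal M_{g_0}$. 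Ergodic measures of the product project to ergodic measures of $g_0$, and every ergodic measure of $g_0$ lifts to an ergodic one. Hence $\Delta^{q}_{\rm BL}(g,f)=\Delta^{q_0}_{\rm BL}(g_0,f)<\varepsilon$. For $t=0$ I take $Z$ to be a point, so $Y$ is countable.
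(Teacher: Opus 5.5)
Your implication (2)$\Rightarrow$(1) is the paper's argument, and Step~3 is an acceptable variant. You use a rescaled subshift (e.g.\ a $\beta$-shift) or the shift on $[0,1]^{\mathbb Z}$, of small diameter, in place of the auxiliary system $W_a$. Note that ergodic lifts exist by the argument of Lemma~\ref{lem:levantamiento_factor}, not because products of ergodic measures are ergodic.

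The gap is in Steps~1--2, in the choice of the cycles onto which the tails accumulate, and there the two steps conflict. In Step~1 the tails are glued into cycles of the $\varepsilon$-pseudo-orbit graph on $F$. This makes the gluing automatic, but such cycles can project to measures far from $\mathcal M_f(X)$, let alone $\mathbb E_f(X)$. For example, take the homeomorphism $f(x)=x+c\,x(1-x)$ of $[0,1]$ with $0<c<4\varepsilon$. A net point $x$ near $1/2$ carries a self-loop, yet $\delta_x$ lies at $d_{\rm BL}$-distance almost $1/2$ from every measure in $\mathcal M_f(X)=\operatorname{conv}\{\delta_0,\delta_1\}$ (test against $\varphi(x)=\tfrac12-|x-\tfrac12|$).

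In Step~2 you see the problem and keep only loops closed from a single generic segment of some $x_k$. Then the gluing is no longer automatic. Nothing in the proposal shows that the forward or backward orbit of a net point ever comes close to one of these finitely many loops. So the tails cannot be attached with small jumps, and you lose either the $d_{GH^0}$ bound or the bound on $\Delta_{\rm BL}^{q_0}(g_0,f)$. Moreover, closing a generic segment ``with one small jump'' requires $f^m(x_k)$ to be close to $x_k$. That needs a generic point that is also recurrent, which you do not arrange.

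The missing idea is to choose the cycles orbit by orbit, inside the limit sets of the orbit being spliced. Take the bi-infinite orbit $(x_{i,n})_{n\in\mathbb Z}$ through $x_i$, which exists by surjectivity. Its sets $A_i^\pm$ of accumulation points as $n\to\pm\infty$ are nonempty, compact, and satisfy $f(A_i^\pm)=A_i^\pm$, so they support ergodic measures $\mu_i^\pm$. Birkhoff's theorem and Poincaré recurrence along a countable base give points $\xi_i^\pm\in A_i^\pm$ that are both generic and recurrent. Hence there is $p_i^\pm$ with $d_X(f^{p_i^\pm}(\xi_i^\pm),\xi_i^\pm)<\theta$ and $d_{\rm BL}(\mathsf e^f_{p_i^\pm}(\xi_i^\pm),\mu_i^\pm)<\theta$ (Lemma~\ref{lem:recurrentes}).

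Since the orbit accumulates on $A_i^\pm$, it passes within $\theta$ of $\xi_i^+$ at some forward time and of $f(\xi_i^-)$ at some backward time. Cut there and continue periodically through $\xi_i^\pm,\dots,f^{p_i^\pm-1}(\xi_i^\pm)$. Each splice and each closing of a period costs one jump of size less than $\theta$. Every limit cycle then projects to $\mathsf e^f_{p_i^\pm}(\xi_i^\pm)$, which is $\theta$-close to an ergodic measure. Extra cycles for a $d_{\rm BL}$-net of $\mathbb E_f(X)$ give the other half of the Hausdorff estimate.

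Your shared pool of loops could also be rescued. A loop whose measure is $d_{\rm BL}$-close to an ergodic $\mu$ supported in $A_i^\pm$ must pass near $\operatorname{supp}\mu$, so the tail can be spliced into it. But this again rests on the limit-set observation that the proposal lacks.
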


In Theorem~\ref{thm:A}, the control of $\Delta_{\rm BL}^{q}$ expresses
closeness of the sets of measures in the weak topology. It does not imply
that their measure-theoretic entropies are preserved or that the measure-preserving systems are
isomorphic.

The flexibility described by Theorem~\ref{thm:A} contrasts with what
occurs under Walters' topological stability on a fixed space.
If a homeomorphism $f:X\to X$ is topologically stable, then for every
homeomorphism $g:X\to X$ sufficiently close in $C^0$ there is a
continuous map $h:X\to X$, close to the identity, such that
$f\circ h=h\circ g$.
When $X$ is a closed, connected Riemannian manifold, the closeness
of $h$ to the identity forces $h$ to be surjective. Therefore, $(X,f)$ is a
factor of $(X,g)$ and
$$h_{\rm top}(g)\geq h_{\rm top}(f).$$

The topological $GH$-stability of $f:X\to X$
provides, for every sufficiently close homeomorphism
$g:Y\to Y$, a continuous approximately isometric map $h:Y\to X$
satisfying $f\circ h=h\circ g$. However, its image may be
a proper subset of $X$, even though it is dense at the scale of the
approximation. In this case, $h$ does not define a surjective
semiconjugacy from $(Y,g)$ onto $(X,f)$: it only shows that
$(h(Y),f|_{h(Y)})$ is a factor of $(Y,g)$. Thus, one obtains only
$h_{\rm top}(g)\geq h_{\rm top}(f|_{h(Y)})$, without a general comparison
with $h_{\rm top}(f)$.
\subsection{Entropy rigidity}
For $n\geq1$ and $K,i_0,D>0$, we denote by
$$
\mathcal R(n,K,i_0,D)
=
\left\{
\begin{array}{l|l}
M &
\begin{array}{l}
M\text{ is a closed, connected Riemannian manifold},\\
\dim M=n,\quad |\sec_M|\leq K,\\
\inj(M)\geq i_0,\quad \diam(M)\leq D
\end{array}
\end{array}
\right\}.
$$
Here, closed means compact and without boundary. The condition
$|\sec_M|\leq K$ means that the sectional curvature of every
two-dimensional plane in each tangent space of $M$ belongs
to $[-K,K]$. Moreover,
$$
\inj(M)=\inf_{p\in M}\inj_M(p)
$$
is the injectivity radius of $M$, where $\inj_M(p)$ is the
supremum of the radii $r>0$ for which the exponential
map $\exp_p$ is a diffeomorphism from the ball
$B_{T_pM}(0,r)$ onto its image. Finally,
$$
\diam(M)=\sup_{p,q\in M}d_M(p,q),
$$
where $d_M$ is the Riemannian distance.
In dimension one, the sectional curvature condition
is vacuous, since the tangent spaces contain no two-dimensional
planes. 

Our second result establishes that, within the class of Riemannian manifolds $\mathcal R(n,K,i_0,D)$, topological
$GH$-stability yields surjective semiconjugacies under
sufficiently small perturbations, together with an entropy
inequality and the lifting of invariant and ergodic measures. More precisely:

\begin{maintheorem}
\label{thm:B}
Let $n\geq1$, $K,i_0,D>0$, and let $f:X\to X$ be a
$GH$-stable homeomorphism, with $X\in\mathcal R(n,K,i_0,D)$. Then there exists
$\delta>0$ such that every homeomorphism $g:Y\to Y$, with
$Y\in\mathcal R(n,K,i_0,D)$ and $d_{GH^0}(f,g)<\delta$, admits
a continuous surjective map $h:Y\to X$ satisfying
$$
f\circ h=h\circ g.
$$
Consequently,
$$
h_{\rm top}(g)\geq h_{\rm top}(f),
\quad
h_*\mathcal M_g(Y)=\mathcal M_f(X)
 \; \text{ and }\;
h_*\mathbb{E}_g(Y)=\mathbb{E}_f(X).
$$
\end{maintheorem}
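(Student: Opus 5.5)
The plan is to obtain the semiconjugacy directly from $GH$-stability and then prove it is surjective with a degree argument; the entropy inequality and the measure statements will follow from standard facts about factors.

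Fix $\varepsilon>0$ small, to be chosen in terms of $n,K,i_0,D$ and $X$. Definition~\ref{def:estabilidad_GH} gives $\eta>0$. For $\delta\le\eta$, every homeomorphism $g:Y\to Y$ with $d_{GH^0}(f,g)<\delta$ then admits a continuous $\varepsilon$-isometry $h:Y\to X$ with $f\circ h=h\circ g$. It remains to show that $h$ is onto when $\varepsilon$ is small enough.

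For surjectivity, use the precompactness and homotopy stability of the class $\mathcal R(n,K,i_0,D)$.
\begin{enumerate}
\item By Croke's inequality, the injectivity radius bound gives $\operatorname{vol}(M)\ge c(n,i_0)>0$. Together with $\sec\ge -K$ and $\diam\le D$, this places every member of the class in a class of Alexandrov spaces with a uniform lower volume bound. After rescaling the metrics so that the curvature bound is normalized to $-1$, the homotopy stability theorem of Mitsuishi--Yamaguchi applies.
\item That theorem yields the following. If $Y$ and $X$ are in the class and $\varepsilon$ is small, then any $\varepsilon$-isometry $Y\to X$, in particular our continuous $h$, is uniformly $\tau(\varepsilon)$-close to a Lipschitz homotopy equivalence $\phi:Y\to X$, where $\tau(\varepsilon)\to0$.
\item Shrinking $\varepsilon$ further, we may take $\tau(\varepsilon)<i_0$. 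Then $h(y)$ and $\phi(y)$ are joined by a unique minimizing geodesic in $X$, depending continuously on $y$. This gives a homotopy $h\simeq\phi$.
\item Since $\phi$ is a homotopy equivalence between closed connected $n$-manifolds, $\phi_*:H_n(Y;\mathbb Z/2)\to H_n(X;\mathbb Z/2)$ is an isomorphism. Both groups are $\mathbb Z/2$, so $h_*$ is nonzero on the fundamental class.
\item If $h$ were not onto, it would factor through $X\setminus\{p\}$ for some $p$. But $H_n(X\setminus\{p\};\mathbb Z/2)=0$ because $X\setminus\{p\}$ is a noncompact connected $n$-manifold. Then $h_*=0$, a contradiction. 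Hence $h$ is surjective.
\end{enumerate}
Take $\delta=\eta(\varepsilon)$ for this $\varepsilon$.

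The consequences then follow.
\begin{itemize}
\item \emph{Entropy.} Since $(X,f)$ is a factor of $(Y,g)$, we get $h_{\rm top}(g)\ge h_{\rm top}(f)$ by the standard factor inequality.
\item \emph{Invariant measures.} Clearly $h_*\mathcal M_g(Y)\subseteq\mathcal M_f(X)$. For the reverse inclusion, take $\mu\in\mathcal M_f(X)$. Because $h$ is surjective, Hahn--Banach or a measurable selection gives some $\nu_0\in\mathbb P(Y)$ with $h_*\nu_0=\mu$. Cesàro averages of $g^k_*\nu_0$ still push forward to $\mu$, and any weak limit point $\nu$ is $g$-invariant with $h_*\nu=\mu$ by continuity of $h_*$.
\item \emph{Ergodic measures.} For $\mu$ ergodic, the fibre $\{\nu\in\mathcal M_g(Y):h_*\nu=\mu\}$ is a nonempty compact convex set, so it has an extreme point $\nu$. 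If $\nu=t\nu_1+(1-t)\nu_2$ with $\nu_i\in\mathcal M_g(Y)$, then $\mu=th_*\nu_1+(1-t)h_*\nu_2$. Ergodicity of $\mu$ forces $h_*\nu_i=\mu$, so the $\nu_i$ lie in the fibre and hence $\nu_i=\nu$. Thus $\nu$ is extreme in $\mathcal M_g(Y)$, i.e.\ ergodic.
\item \emph{Push-forward of ergodic measures.} Conversely, $h_*$ sends ergodic measures to ergodic ones: if $A$ is $f$-invariant, then $h^{-1}A$ is $g$-invariant.
\end{itemize}
Together these give $h_*\mathbb E_g(Y)=\mathbb E_f(X)$. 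This piece could be stated as a separate measure-lifting lemma.

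The main obstacle is the homotopy-stability step: checking that the hypotheses of the Mitsuishi--Yamaguchi theorem, possibly after rescaling, are uniform over the class. It must also be checked that the theorem supplies a homotopy equivalence uniformly close to an arbitrary given continuous $\varepsilon$-isometry, rather than just to some approximation. If it only produces some homotopy equivalence $\phi$ that is $\tau$-close to an approximation map, the plan is to compare $\phi$ with $h$ through that approximation map, using the triangle inequality and smallness of $\varepsilon$. If the resulting distance stays below $i_0$, the geodesic homotopy construction in step 3 still works.
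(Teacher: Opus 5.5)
Your proposal is correct and follows the paper's proof essentially step for step: rescaled Mitsuishi--Yamaguchi homotopy stability, the geodesic homotopy $h\simeq\phi$ below $\inj(X)$, the $\mathbb Z_2$ top-homology argument through $X\setminus\{p\}$, and Hahn--Banach plus Ces\`aro averaging for lifts. The differences are minor: you fix $\varepsilon$ directly rather than arguing by contradiction along a sequence, you use Croke's inequality instead of G\"unther's comparison (so the upper bound $\sec\leq K$ is never needed), and you get ergodic lifts from an extreme point of the fibre rather than from uniqueness of the ergodic decomposition.

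Your closing worry is moot, since Theorem~\ref{thm:MY} as stated gives a homotopy equivalence close to the given approximation $\theta=h$. This matters, because your fallback would fail: two $\varepsilon$-approximations $Y\to X$ need not be uniformly close, as composing one with a nontrivial isometry of $X$ shows.
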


The equalities concerning measures mean that every invariant measure
of $f$ admits an invariant lift for $g$, and that every
ergodic measure of $f$ admits an ergodic lift. These lifts
are not asserted to be unique.

The two theorems thus show the role of geometric
restrictions. When the space is allowed to vary freely,
the surjectivity of $f$ characterizes the possibility of realizing
any entropy value in $[0,+\infty]$ in every
$GH^0$ neighborhood, with simultaneous approximation of the sets of measures.
By contrast, within a fixed class $\mathcal R(n,K,i_0,D)$,
a $GH$-stable system persists as a factor of every sufficiently
close perturbation, and its invariant and ergodic measures admit
lifts under the factor map. In particular, it is a
local minimum of topological entropy; hence, entropy
is lower semicontinuous at this reference system when
restricted to perturbations within that class. Example~\ref{ex:gato}, based on a hyperbolic toral automorphism,
illustrates both conclusions for the same reference system
and shows that the maps provided by
$GH$-stability may fail to be surjective when perturbations
without geometric restrictions are allowed.

\section{Proof of Theorem A}
\label{sec:realizacion}

To prove Theorem~\ref{thm:A}, we begin by constructing an
auxiliary system with a fixed point whose entropy can take any
value in $[0,+\infty]$. We then choose recurrent points that are
generic for ergodic measures and use them to construct a
countable zero-entropy model approximating the dynamics and its
sets of invariant and ergodic measures. Taking the product
with the auxiliary system yields the desired entropy value.

\begin{lemma}
\label{lem:sistema_auxiliar_entropia}
For every $a\in[0,+\infty]$, there exist a compact metric space
$W_a$, a homeomorphism $T_a:W_a\to W_a$, and a fixed point
$*\in W_a$ such that
$$
h_{\rm top}(T_a)=a.
$$
\end{lemma}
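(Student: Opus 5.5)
We build $T_a$ from full shifts, adjusting the entropy by a time-change and, for infinite entropy, by a countable product. The case $a=0$ is trivial: take $W_0=\{*\}$ with the identity map.

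For $a\in(0,+\infty)$, choose an integer $k\geq2$ with $\log k\geq a$. Consider the full two-sided shift $\sigma$ on $\Sigma_k=\{0,\dots,k-1\}^{\mathbb Z}$, which is a homeomorphism with $h_{\rm top}(\sigma)=\log k$. The constant sequence $0^\infty$ is a fixed point.

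To lower the entropy to exactly $a$, we use a subshift whose entropy varies continuously. A clean option is a $\beta$-shift: for $\beta>1$, the two-sided $\beta$-shift is a subshift with entropy $\log\beta$ containing the fixed point $0^\infty$. Choosing $\beta=e^a$ realizes every $a>0$.

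If we prefer to avoid $\beta$-shift theory, we can combine integer powers and products instead. The identity $h_{\rm top}(T^m)=m\,h_{\rm top}(T)$ alone only scales the entropy. So the alternative uses a suspension-free trick: realize $a=\sum_j a_j$ with each $a_j$ of the form $\frac1{m_j}\log k_j$, and take a product of systems.

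There is also a simpler route that avoids $\beta$-shifts. Take the subshift $X_S\subset\{0,1\}^{\mathbb Z}$ of sequences in which every maximal block of $1$'s... The cleanest closed-form family, however, is the set of sequences where $1$'s are separated by gaps in a set $S\subset\mathbb N$ (an $S$-gap shift). Its entropy is $\log\lambda$, where $\lambda$ solves $\sum_{n\in S}\lambda^{-(n+1)}=1$. Every value in $(0,\log 2]$ is attained for suitable $S$, and $0^\infty$ is a fixed point. Combined with products of full shifts, whose entropies add, this covers all finite $a$.

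I would cite the $\beta$-shift fact as standard, since it is the shortest path.

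For $a=+\infty$, take $W_\infty=\prod_{j\geq1}\Sigma_2$ with the product metric $\sum_j 2^{-j}d_j$ and $T=\prod\sigma$. This is a homeomorphism of a compact metric space with fixed point $(0^\infty)_j$. Its entropy is at least $h_{\rm top}(\sigma^{\times N})=N\log 2$ for every $N$, via the factor map onto the first $N$ coordinates, so it is infinite. Equivalently, one may use the shift on $([0,1]^{\mathbb N})^{\mathbb Z}$.

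The only nontrivial input is the exact realization of an arbitrary finite positive value. This rests on the continuity and surjectivity of $\beta\mapsto\log\beta$ for $\beta$-shifts, or of the $S$-gap entropy formula. Everything else follows from the additivity of entropy under products and monotonicity under factors.
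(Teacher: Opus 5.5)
Your proof is correct but takes a different route from the paper. You realize each finite $a>0$ with a single subshift, the two-sided $\beta$-shift with $\beta=e^{a}$: its entropy $\log\beta$ comes from the R\'enyi--Parry theory, and $0^\infty$ is a fixed point. You treat $a=+\infty$ separately with $\prod_{j\geq1}\Sigma_2$, bounding the entropy from below through the factor maps onto finite subproducts.

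The paper uses only the entropy of full shifts and handles all $a>0$ the same way. It takes the towers $S_{N,k}$ over $\sigma_N$, a constant-roof discrete suspension that divides entropy by $k$; this is exactly the ``time change'' you set aside. It then chooses $(N_j,k_j)$ with $\sup_j\log N_j/k_j=a$ and glues countably many towers at an added point $*$, shrinking the $j$-th piece by $2^{-j}$. The point $*$ supplies the fixed point, which the towers lack when $k\geq2$. The shrinking gives the upper bound: $\{*\}\cup\bigcup_{j\geq J}W^j$ is invariant with diameter at most $2^{1-J}$, so once $2^{1-J}<\varepsilon$ it contributes at most one point to any $(m,\varepsilon)$-separated set. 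Hence $h_{\rm top}(T_a)\leq\sup_j\log N_j/k_j$.

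So the paper obtains the exact value as a supremum of easily computed entropies. It stays fully self-contained, at the cost of a non-transitive system and a short separated-set count. Your route is shorter and gives a transitive, expansive example for finite $a$, but it relies on nontrivial $\beta$-expansion theory for the key point, an exactly tunable entropy. For the use in Theorem~\ref{thm:A} only the fixed point and the exact entropy matter, so either construction works.

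In a write-up, delete the unfinished asides. Also note that in the $S$-gap alternative, $0^\infty\in X_S$ only when $S$ is infinite.
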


\begin{proof}
For $a=0$, take a singleton $W_0=\{*\}$ and $T_0(*)=*$.
Suppose that $a>0$. For $N\geq2$, let $\Sigma_N=\{1,\ldots,N\}^{\mathbb Z}$,
with the product topology, and let
$\sigma_N:\Sigma_N\to\Sigma_N$ be the two-sided shift.
For $k\geq1$, consider
$$
W(N,k)=\Sigma_N\times\{0,1,\ldots,k-1\}
$$
and define $S_{N,k}:W(N,k)\to W(N,k)$ by
$$
S_{N,k}(\omega,r)=
\begin{cases}
(\omega,r+1),&0\leq r<k-1,\\
(\sigma_N\omega,0),&r=k-1.
\end{cases}
$$
The map $S_{N,k}$ is a homeomorphism. Its iterate
$S_{N,k}^k$ preserves each level $\Sigma_N\times\{r\}$, and
its restriction to that level is conjugate to $\sigma_N$.
Since these levels form a finite cover by closed sets
invariant under $S_{N,k}^k$, we have
$$
h_{\rm top}(S_{N,k}^k)
=h_{\rm top}(\sigma_N)=\log N.
$$
Therefore, $h_{\rm top}(S_{N,k})=\frac{\log N}{k}$. If $0<a<+\infty$, choose integers $k_j\to+\infty$
sufficiently large that
$N_j=\lfloor e^{ak_j}\rfloor\geq2$. Then
$$
\frac{\log N_j}{k_j}\leq a,
\qquad
\frac{\log N_j}{k_j}\longrightarrow a.
$$
If $a=+\infty$, take $k_j=1$ and a sequence of integers
$N_j\geq2$ with $N_j\to+\infty$.

For each $j\geq1$, write $W^j=W(N_j,k_j)$ and equip
$W^j$ with a compatible metric $d_j$ such that
$\diam(W^j,d_j)\leq1$. Consider disjoint copies of these
spaces, add a point $*$, and define
$$
W_a=\{*\}\sqcup\left(\bigsqcup_{j=1}^{\infty}W^j\right).
$$ 
\begin{figure}[H]
    \centering
    `
     \includegraphics[scale=0.28]{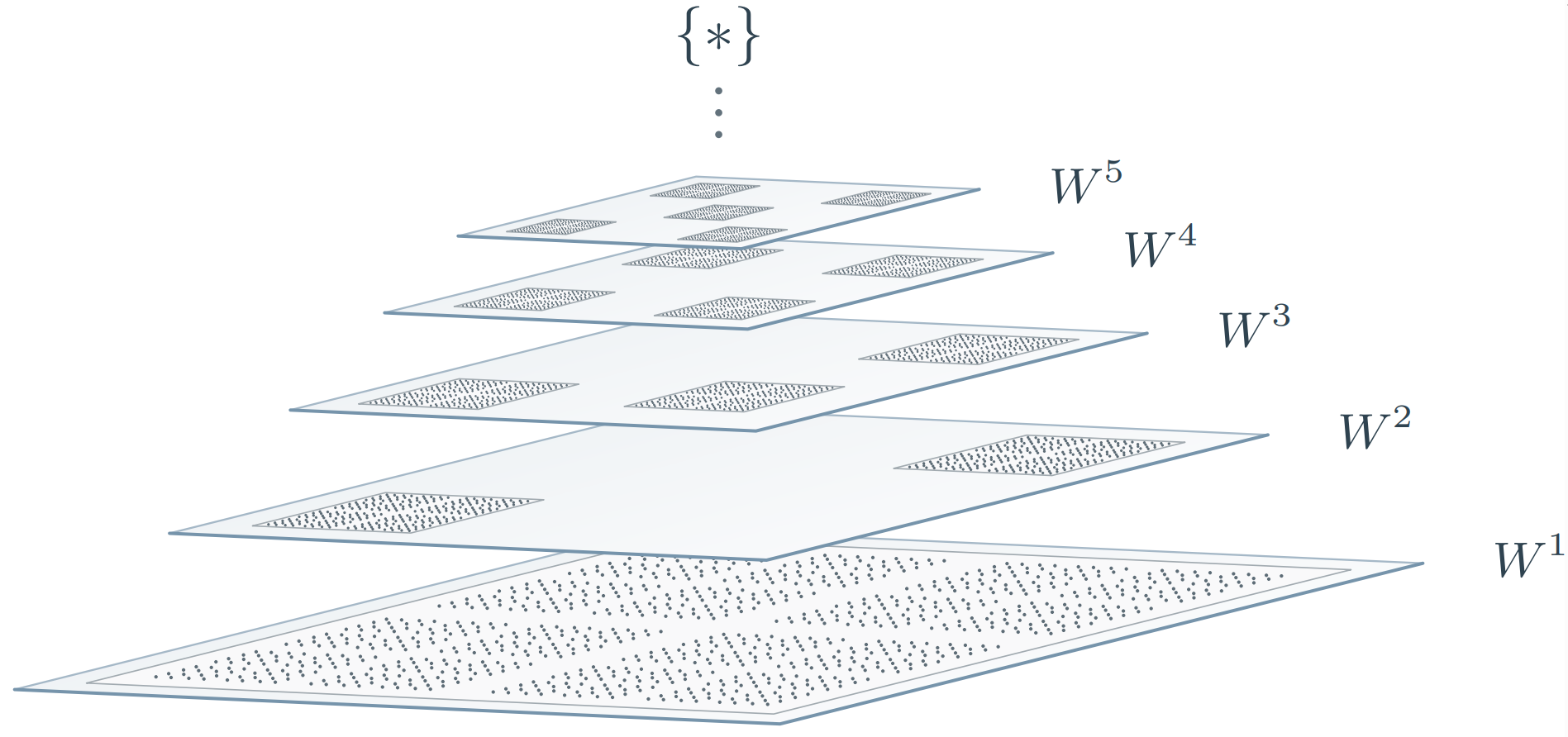}
    \caption{Schematic representation of $W_a$,
    illustrated with $k_j=j$.}
    \label{fig:Wa}
\end{figure}

Define a distance $d$ on $W_a$ by
$$
d(*,*)=0,
\qquad
d(*,x)=d(x,*)=2^{-j}
\quad(x\in W^j),
$$
and, for $x,y\neq *$,
$$
d(x,y)=
\begin{cases}
2^{-j}d_j(x,y),
&x,y\in W^j,\\
2^{-j}+2^{-\ell},
&x\in W^j,\ y\in W^\ell,\ j\neq\ell.
\end{cases}
$$
These formulas define a metric on $W_a$. The space $W_a$ is compact. Indeed, if a sequence
visits infinitely many sets $W^j$, it has a subsequence
$(x_{n_m})$ such that $x_{n_m}\in W^{j_m}$ and
$j_m\to+\infty$. Since
$d(x_{n_m},*)=2^{-j_m}\to0$, this subsequence converges to $*$.
If the sequence is contained in the union of finitely many
sets $W^j$ and the point $*$, compactness of that union
yields a convergent subsequence.

Define $T_a(*)=*$ and
$$
T_a|_{W^j}=S_{N_j,k_j}
\qquad(j\geq1).
$$
Each $W^j$ is open and closed in $W_a$, so $T_a$
and its inverse are continuous on each $W^j$. They are also
continuous at $*$, since they preserve each $W^j$, all of whose
points lie at distance $2^{-j}$ from $*$.
Thus, $T_a$ is a homeomorphism.

Since each $W^j$ is closed and invariant,
$$
h_{\rm top}(T_a)
\geq\sup_{j\geq1}h_{\rm top}(S_{N_j,k_j})
=\sup_{j\geq1}\frac{\log N_j}{k_j}
=a.
$$
This completes the proof when $a=+\infty$.
Thus, suppose that $a<+\infty$.

To prove the reverse inequality, fix $\varepsilon>0$
and choose $J\geq2$ such that $2^{1-J}<\varepsilon$.
The union
$$
R_J=\{*\}\cup\bigcup_{j\geq J}W^j
$$
satisfies
$$
d(T_a^q(x),T_a^q(y))
\leq2^{1-J}<\varepsilon
\qquad(x,y\in R_J,\ q\geq0).
$$
Let $s_m(F,\varepsilon)$ denote the maximum cardinality of an
$(m,\varepsilon)$-separated set for $F$. Since an
$(m,\varepsilon)$-separated set contains at most one point
of $R_J$, we have
$$
s_m(T_a,\varepsilon)
\leq
1+\sum_{j=1}^{J-1}s_m(S_{N_j,k_j},\varepsilon)\quad\Rightarrow\quad s_m(T_a,\varepsilon)\leq J\max_{1\leq j<J}s_m(S_{N_j,k_j},\varepsilon),
$$
where each $W^j$ is equipped with the metric induced by $W_a$.
Write
$$
h(F,\varepsilon)
=
\limsup_{m\to\infty}\frac1m\log s_m(F,\varepsilon).
$$
Since the preceding sum has finitely many terms,
$$
h(T_a,\varepsilon)
\leq
\max_{1\leq j<J}h(S_{N_j,k_j},\varepsilon)
\leq
\sup_{j\geq1}h_{\rm top}(S_{N_j,k_j})
=a.
$$
Taking the supremum over $\varepsilon>0$, we obtain
$h_{\rm top}(T_a)\leq a$. Consequently,
$h_{\rm top}(T_a)=a$.
\end{proof}

The choice of the limiting cycles and the cycles associated
with ergodic measures relies on the following fact.

\begin{lemma}
\label{lem:recurrentes}
Let $f:X\to X$ be a continuous map of a compact metric
space, and let $K\subset X$ be a nonempty compact set with
$f(K)\subset K$. There exists at least one ergodic
$f$-invariant measure $\mu$ with $\mu(K)=1$. Moreover, for each
ergodic measure $\mu$ with $\mu(K)=1$, there is a point
$\xi\in K$ such that
$$
f^{n_j}(\xi)\longrightarrow\xi
\quad\text{for some }n_j\longrightarrow\infty,
\qquad
\frac1n\sum_{k=0}^{n-1}\delta_{f^k(\xi)}
\xrightarrow[n\to\infty]{d_{\rm BL}}\mu.
$$
Consequently, given $a,\eta>0$, one can choose $p\geq1$
such that
$$
d_X(f^p(\xi),\xi)<a,
\qquad
d_{\rm BL}\!\left(
\frac1p\sum_{k=0}^{p-1}\delta_{f^k(\xi)},\mu
\right)<\eta.
$$
\end{lemma}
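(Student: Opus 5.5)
The plan has three steps: produce an ergodic measure on $K$, find a point that is simultaneously generic and recurrent, and then pick a single time $p$ doing both jobs.

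\emph{Existence.} Apply Krylov--Bogolyubov to the compact system $(K,f|_K)$. Fix $x\in K$ and take a weak limit $\nu$ of the empirical measures $\mathsf e_m^f(x)$; these are supported in $K$, since $f(K)\subset K$. Then $\nu$ is $f$-invariant and $\nu(K)=1$, because $K$ is closed. The set of invariant measures supported on $K$ is a nonempty compact convex face of $\mathcal M_f(X)$. By Krein--Milman it has an extreme point, and an extreme point of this face is extreme in $\mathcal M_f(X)$. Hence it is ergodic.

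\emph{Generic recurrent point.} Let $\mu$ be ergodic with $\mu(K)=1$.

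First, Birkhoff's ergodic theorem applied to a countable dense family $\{\varphi_i\}$ in $C(X)$ gives a set $G$ with $\mu(G)=1$. For $\xi\in G$ we have $\frac1n\sum_{k<n}\varphi_i(f^k\xi)\to\int\varphi_i\,d\mu$ for all $i$. By density and uniform approximation this holds for all $\varphi\in C(X)$, i.e. $\mathsf e_n^f(\xi)\to\mu$ weakly. Since $d_{\rm BL}$ metrizes the weak topology, the convergence also holds in $d_{\rm BL}$.

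Second, the Poincaré recurrence theorem (in its topological form on a compact metric, hence second countable, space) gives a full-measure set $R$ of points $\xi$ that are recurrent, i.e. $f^{n_j}(\xi)\to\xi$ along some $n_j\to\infty$. Concretely, for each ball $B$ from a countable base, $\mu$-almost every point of $B$ returns to $B$ infinitely often.

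Any $\xi\in G\cap R\cap K$ then works; this set has full $\mu$-measure and is therefore nonempty. The only care needed here is the passage from countably many test functions to weak convergence, which is standard.

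\emph{Choice of $p$.} Given $a,\eta>0$, genericity gives $N$ with $d_{\rm BL}(\mathsf e_n^f(\xi),\mu)<\eta$ for all $n\geq N$. Recurrence gives some $n_j\geq N$ with $d_X(f^{n_j}(\xi),\xi)<a$. Set $p=n_j$. The key point is that recurrence supplies return times that are arbitrarily large, so one time $p$ satisfies both inequalities.

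The main obstacle is mild: getting recurrence and genericity at the same point. Both hold on full-measure sets, so intersecting them resolves it.
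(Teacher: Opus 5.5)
Your proof is correct and follows the paper's argument essentially step by step. Both use Krylov--Bogolyubov on $K$, Birkhoff's theorem applied to a countable dense family of continuous functions, Poincaré recurrence over a countable base, and then a choice of $p=n_j$ large enough for both estimates. The only deviation is in the existence step: you obtain an ergodic measure as a Krein--Milman extreme point of the closed face $\{\nu\in\mathcal M_f(X):\nu(K)=1\}$, whereas the paper invokes the ergodic decomposition of $\nu$; your route is slightly more elementary but reaches the same conclusion.
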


\begin{proof}
Take $x\in K$ and consider the measures
$$
\nu_N=\frac1N\sum_{k=0}^{N-1}\delta_{f^k(x)}.
$$
By compactness of $\mathbb P(K)$, some subsequence
converges weakly to a measure $\nu\in\mathbb P(K)$.
For every $\varphi\in C(K)$, we have
$$
\int\varphi\circ f\,d\nu_N-\int\varphi\,d\nu_N
=
\frac{\varphi(f^N(x))-\varphi(x)}{N}
\longrightarrow0.
$$
Thus, $\nu$ is invariant. Its ergodic decomposition
yields an ergodic measure supported on $K$.

Now fix any ergodic measure $\mu$ with $\mu(K)=1$.
Since $K$ is a compact metric space, $C(K)$ has a countable
family that is dense in the uniform norm. Applying Birkhoff's ergodic
theorem to this family and then using uniform
approximation shows that, for $\mu$-almost every $\xi\in K$,
$$
\frac1n\sum_{k=0}^{n-1}\varphi(f^k(\xi))
\longrightarrow\int_K\varphi\,d\mu
\qquad\text{for every }\varphi\in C(K).
$$
In particular, the empirical measures of such a point converge weakly
to $\mu$ and hence converge in $d_{\rm BL}$.

We show that $\xi$ can also be chosen to be recurrent.
Let $\{U_m:m\geq1\}$ be a countable base of open sets in $K$.
By Poincaré's recurrence theorem, for each $m$,
$\mu$-almost every point of $U_m$ returns to $U_m$ infinitely
often. Since the base is countable, we can choose a generic
point $\xi\in K$ satisfying this property for every
basic open set containing it.
For each $j\geq1$, take $U_{m_j}$ such that
$$
\xi\in U_{m_j}\subset B_K(\xi,1/j).
$$
Choose successive return times
$0<n_1<n_2<\cdots$ with
$f^{n_j}(\xi)\in U_{m_j}$. Then
$f^{n_j}(\xi)\to\xi$.

Finally, since $n_j\to\infty$, convergence of the
empirical measures allows us to choose $p=n_j$ sufficiently
large that the two inequalities in the statement
hold simultaneously.
\end{proof}

The following lemma combines the construction of the countable model
with the estimates needed to subsequently pass
to the product with the auxiliary system.

\begin{lemma}
\label{lem:modelo_numerable}
Let $f:X\to X$ be a surjective continuous map of a
compact metric space. For every $\delta>0$, there exist a
countable compact metric space $(Y_0,d_0)$, a homeomorphism
$g_0:Y_0\to Y_0$ with $h_{\rm top}(g_0)=0$, and
$\delta$-isometries $q_0:Y_0\to X$ and $s_0:X\to Y_0$,
with $q_0$ continuous, such that
$\Delta_{\rm BL}^{q_0}(g_0,f)<\delta$ and
$$
\sup_{y\in Y_0}
d_X\bigl(q_0(g_0(y)),f(q_0(y))\bigr)<\delta
\quad\text{and}\quad
\sup_{x\in X}
d_0\bigl(g_0(s_0(x)),s_0(f(x))\bigr)<\delta.
$$
\end{lemma}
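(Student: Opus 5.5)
We will build $Y_0$ as a countable union of \emph{cycles} together with their limit points, arranged so that each piece shadows a pseudo-orbit of $f$. Fix $\delta>0$ and choose $a>0$ much smaller than $\delta$. Let $F\subset X$ be a finite $a$-dense set.

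\textbf{The pseudo-orbits.} Since $f$ is surjective, every $x\in F$ admits a full backward orbit, so it lies on a bi-infinite $f$-orbit $(x_k)_{k\in\mathbb Z}$. For each $x\in F$ we use Lemma~\ref{lem:recurrentes} twice. Applied to the omega-limit set of $x$, it gives a recurrent point $\xi^+$ with $d(f^{p_+}(\xi^+),\xi^+)<a$. Applied to a compact invariant subset of the alpha-limit set of the chosen backward orbit, it gives a recurrent point $\xi^-$. These yield two periodic $a$-pseudo-orbits $C_x^{\pm}$, namely the closed loops $\xi,f(\xi),\dots,f^{p-1}(\xi)$.

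Next we fix a finite set $\{\mu_1,\dots,\mu_r\}$ that is $\delta/4$-dense in $\mathbb E_f(X)$; this is possible since $\mathbb E_f(X)$ is totally bounded. Lemma~\ref{lem:recurrentes} gives, for each $\mu_i$, a periodic $a$-pseudo-orbit $C_i$ whose empirical measure lies within $\delta/4$ of $\mu_i$.

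\textbf{The model space.} $Y_0$ will be a disjoint union of finitely many periodic orbits, realizing the cycles $C_i$ and $C_x^\pm$, together with, for each $x\in F$, a bi-infinite \emph{heteroclinic} orbit. Concretely this is a sequence of points $y_k$, $k\in\mathbb Z$, which for $k\to-\infty$ winds around $C_x^-$ through longer and longer repetitions and then, for $k\to+\infty$, winds around $C_x^+$. Its points accumulate only on the two cycles. We define $q_0$ by sending each point to the point of $X$ it represents; $q_0$ is continuous because every point of $Y_0$ is isolated except the cycle points. The metric $d_0$ is built by taking $d_X$ of the $q_0$-images and adding tiny separating terms that go to zero along the winding, so that $q_0$ has distortion less than $\delta$ and the winding points converge to the cycle points.

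\textbf{The required properties.}
\begin{itemize}
\item $g_0$ is the shift along each orbit. It is a homeomorphism, and it has zero entropy because $Y_0$ is countable.
\item Define $s_0(x)$ as the point on the heteroclinic orbit of the nearest element of $F$ at the position representing it. Then both intertwining estimates hold up to $O(a)$, from the pseudo-orbit errors and the $a$-density of $F$.
\item Every $g_0$-ergodic measure is supported on a periodic orbit, since the only recurrent points are the cycle points. Its $q_0$-push-forward is therefore one of the cycle empirical measures, which is close to an $f$-invariant measure.
\end{itemize}

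\textbf{Main obstacle.} The hardest step is the measure condition $\Delta_{\rm BL}^{q_0}(g_0,f)<\delta$. The inclusion of $\mathbb E_f$ in a neighborhood of $q_{0*}\mathbb E_{g_0}$ is guaranteed by the cycles $C_i$. For the reverse inclusion, the auxiliary cycles $C_x^\pm$ must push forward near \emph{ergodic} measures of $f$. Their empirical measures are close to the ergodic measures used to choose $\xi^\pm$, so we choose these with $\eta$ small. The invariant sets then follow because $\mathcal M_f$ and $\mathcal M_{g_0}$ are the closed convex hulls of their ergodic measures. Here we use the convexity inequality for $d_{\rm BL}$ from Section~\ref{sec:preliminares} together with the linearity of $q_{0*}$.

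I expect the delicate point to be making $d_0$ a genuine metric, with $q_0$ continuous and of small distortion, while the winding points converge to the cycles. The first thing to try is
\[
d_0(y,y')=d_X(q_0y,q_0y')+|\tau(y)-\tau(y')|,
\]
where $\tau$ is a small label, tending to $0$ along the windings, that distinguishes points with the same image.
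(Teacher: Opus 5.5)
Your proposal is correct and follows essentially the same route as the paper: bi-infinite orbits through a finite net, recurrent generic points in the $\alpha$- and $\omega$-limit sets spliced in as limiting cycles, extra cycles for a finite net of ergodic measures, and a metric of the form $d_X(q_0\cdot,q_0\cdot)$ plus a small separating term. The paper's separating term is $\theta\rho$ with $\rho$ any compatible metric of diameter at most $1$, which spares you from building the label $\tau$ by hand. The one point to tighten is that the error in $d_0(g_0(s_0(x)),s_0(f(x)))$ is $\omega_f(a)+O(a)$ rather than $O(a)$, so $a$ must also be chosen using the uniform continuity of $f$, as the paper does with $3\theta+\omega_f(\theta)<\delta$.
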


\begin{proof}
Define
$$
\omega_f(\eta)
=
\sup\{d_X(f(x),f(x')):d_X(x,x')\leq\eta\}.
$$
Since $f$ is uniformly continuous, we can choose
$\theta>0$ such that
\begin{equation}\label{eq:A_eleccion_theta}
3\theta+\omega_f(\theta)<\delta.
\end{equation}

\medskip
\noindent\emph{Construction of the space and the dynamics.}
Let $\{x_1,\ldots,x_m\}$ be a $\theta$-net in $X$.
For each $i\in\{1,\ldots,m\}$, the surjectivity of $f$
allows us to choose a bi-infinite orbit
$(x_{i,n})_{n\in\mathbb Z}$ such that
$$
x_{i,0}=x_i,
\qquad
f(x_{i,n})=x_{i,n+1}
\quad(n\in\mathbb Z).
$$
Define the limit sets
$$
A_i^-=
\bigcap_{L\geq1}
\overline{\{x_{i,n}:n\leq-L\}},
\qquad
A_i^+=
\bigcap_{L\geq1}
\overline{\{x_{i,n}:n\geq L\}}.
$$
They are nonempty compact sets and satisfy
$$
f(A_i^\pm)=A_i^\pm.
$$
Indeed, continuity of $f$ implies
$f(A_i^\pm)\subset A_i^\pm$. For the reverse inclusion,
if $z\in A_i^\pm$, there is a sequence
$n_j\to\pm\infty$ such that $x_{i,n_j}\to z$.
By compactness, a subsequence of $(x_{i,n_j-1})$
converges to a point $w\in A_i^\pm$, and continuity
of $f$ gives $f(w)=z$.

By Lemma~\ref{lem:recurrentes}, for each sign we choose
a measure $\mu_i^\pm\in\mathbb E_f(X)$ supported
on $A_i^\pm$ and a recurrent point $\xi_i^\pm\in A_i^\pm$
that is generic for $\mu_i^\pm$.
We also choose $p_i^\pm\geq1$ such that
\begin{align}
d_X\bigl(f^{p_i^\pm}(\xi_i^\pm),\xi_i^\pm\bigr)
&<\theta,
\label{eq:A_retorno}\\
d_{\rm BL}
\bigl(\mathsf e_{p_i^\pm}^f(\xi_i^\pm),\mu_i^\pm\bigr)
&<\theta.
\label{eq:A_empirica}
\end{align}
Since $f(\xi_i^-)\in A_i^-$ and $\xi_i^+\in A_i^+$, we can choose
$L_i^-\geq1$ and $L_i^+\geq2$ so that
\begin{equation}\label{eq:A_empalmes}
d_X(x_{i,-L_i^-+1},f(\xi_i^-))<\theta,
\qquad
d_X(x_{i,L_i^+},\xi_i^+)<\theta.
\end{equation}

For each $i$, let
$$
\Lambda_i=\{u_{i,n}:n\in\mathbb Z\}
$$
be an abstract copy of $\mathbb Z$. The sets
$\Lambda_i$ are taken to be pairwise disjoint and disjoint from $X$.
Take two further finite sets
$$
C_i^-=\{c_{i,0}^-,\ldots,c_{i,p_i^--1}^-\},
\qquad
C_i^+=\{c_{i,0}^+,\ldots,c_{i,p_i^+-1}^+\},
$$
disjoint from one another and from the sets $\Lambda_i$, and define
$$
Y_i=\Lambda_i\sqcup C_i^-\sqcup C_i^+.
$$

For $n\in\mathbb Z$ and $p\geq1$, denote by $[n]_p$
the unique element of $\{0,\ldots,p-1\}$ congruent to $n$
modulo $p$. Declare all points $u_{i,n}$ to be isolated.
For $0\leq k<p_i^-$, a neighborhood base at $c_{i,k}^-$
is given by
$$
\{c_{i,k}^-\}\cup
\{u_{i,n}:n\leq-N,\ [n+L_i^-]_{p_i^-}=k\},
\qquad N>L_i^-,
$$
and for $0\leq k<p_i^+$, a neighborhood base at $c_{i,k}^+$
is given by
$$
\{c_{i,k}^+\}\cup
\{u_{i,n}:n\geq N,\ [n-L_i^+]_{p_i^+}=k\},
\qquad N>L_i^+,
$$
where $N$ ranges over the indicated integers.

With this topology, $Y_i$ is homeomorphic to a finite
topological sum of spaces consisting of a convergent sequence
and its limit, together with a finite discrete space.
Thus, $Y_i$ is compact, countable, and metrizable.

Define $g_0:Y_i\to Y_i$ by
$$
g_0(u_{i,n})=u_{i,n+1},
\qquad
g_0(c_{i,k}^\pm)
=c_{i,[k+1]_{p_i^\pm}}^\pm.
$$
The shift of indices respects the congruence classes
modulo $p_i^\pm$, so $g_0$ and $g_0^{-1}$
preserve the preceding convergences.
Consequently, $g_0$ is a homeomorphism.

\medskip
\noindent\emph{Map to the original space.}
Define $q_0:Y_i\to X$ by
$$
q_0(u_{i,n})=
\begin{cases}
f^{[n+L_i^-]_{p_i^-}}(\xi_i^-),
&n\leq-L_i^-,\\[1mm]
x_{i,n},
&-L_i^-<n<L_i^+,\\[1mm]
f^{[n-L_i^+]_{p_i^+}}(\xi_i^+),
&n\geq L_i^+,
\end{cases}\;\text{ and }\qquad q_0(c_{i,k}^\pm)=f^k(\xi_i^\pm).
$$

\begin{figure}[H]
    \centering
    \includegraphics[width=1\linewidth]{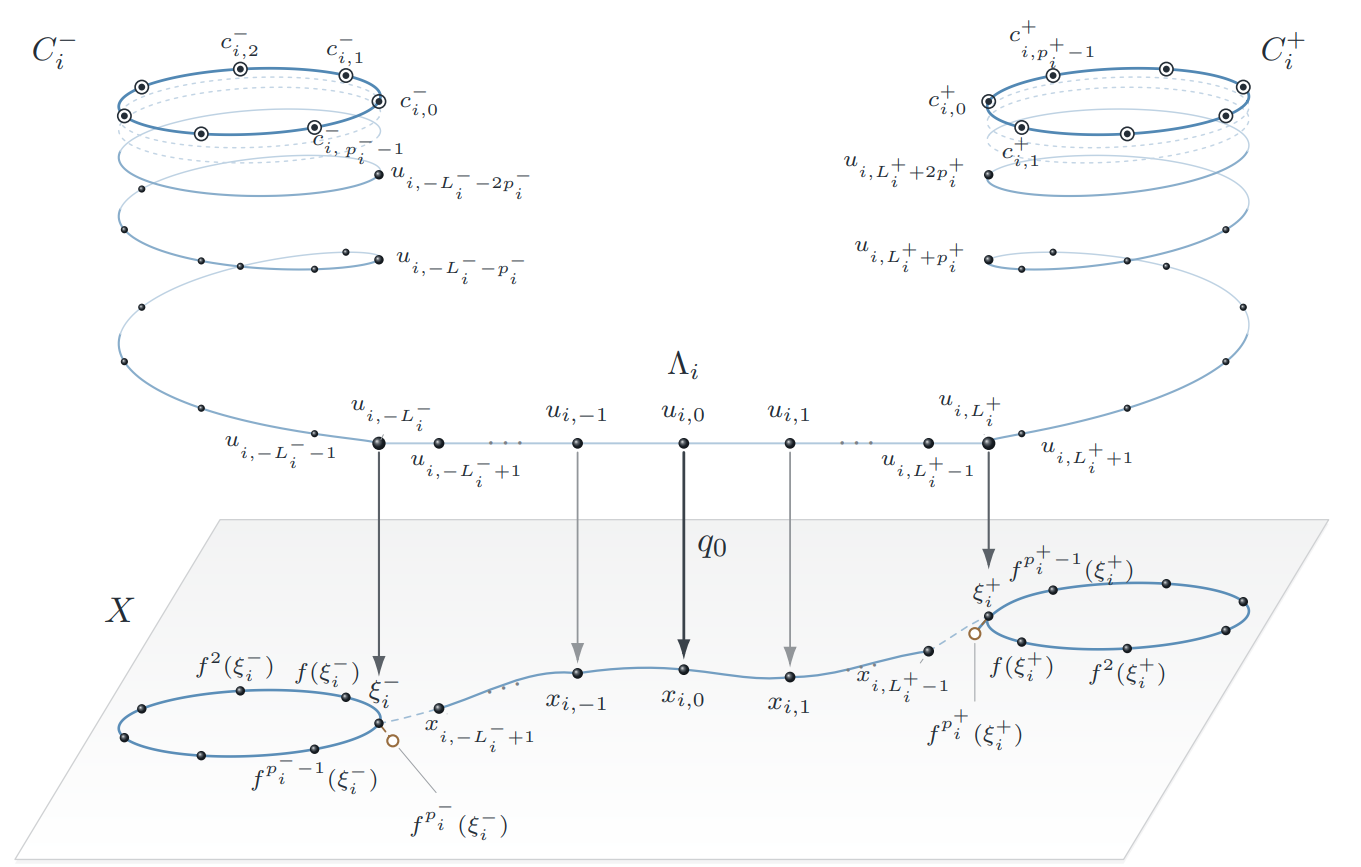}
    \caption{The map $q_0|_{Y_i}$.}
    \label{fig:Wa}
\end{figure}

The map $q_0$ is continuous. At the points of $\Lambda_i$,
this is immediate, since they are isolated. At each point
$c_{i,k}^\pm$, the map is constant and equal to
$f^k(\xi_i^\pm)$ on a sufficiently small basic neighborhood.

Moreover,
\begin{equation}\label{eq:A_defecto}
d_X\bigl(q_0(g_0(y)),f(q_0(y))\bigr)<2\theta
\qquad(y\in Y_i).
\end{equation}
If $-L_i^-<n<L_i^+-1$, the error at $u_{i,n}$ is zero.
At the transition
$u_{i,-L_i^-}\mapsto u_{i,-L_i^-+1}$,
the error is $d_X(x_{i,-L_i^-+1},f(\xi_i^-))<\theta$.
At the transition
$u_{i,L_i^+-1}\mapsto u_{i,L_i^+}$,
the error is $d_X(x_{i,L_i^+},\xi_i^+)<\theta$.
At all other points of the tails and the cycles
$C_i^\pm$, the error is zero, except when the index
passes from $p_i^\pm-1$ to $0$, in which case it is less
than $\theta$ by \eqref{eq:A_retorno}.

Finally, $q_0(u_{i,0})=x_i$.
Since $\{x_1,\ldots,x_m\}$ is a $\theta$-net,
the image of $\bigsqcup_{i=1}^mY_i$ is
$\theta$-dense in $X$.

\medskip
\noindent\emph{Control of measures and entropy.}
The set $\mathbb E_f(X)$ is nonempty by
Lemma~\ref{lem:recurrentes} and is totally bounded,
since it is contained in the compact metric space
$\mathbb P(X)$. Choose a finite $\theta$-net
$$
\{\beta_1,\ldots,\beta_\ell\}
\subset\mathbb E_f(X)
$$
with respect to $d_{\rm BL}$.
For each $j$, Lemma~\ref{lem:recurrentes}
provides a recurrent point $\eta_j$ that is generic
for $\beta_j$ and an integer $m_j\geq1$ such that
$$
d_X\bigl(f^{m_j}(\eta_j),\eta_j\bigr)<\theta,
\qquad
d_{\rm BL}
\bigl(\mathsf e_{m_j}^f(\eta_j),\beta_j\bigr)<\theta.
$$

Take finite sets
$$
B_j=\{v_{j,0},\ldots,v_{j,m_j-1}\},
$$
pairwise disjoint and disjoint from all the $Y_i$.
Extend $g_0$ and $q_0$ by
$$
g_0(v_{j,k})=v_{j,[k+1]_{m_j}},
\qquad
q_0(v_{j,k})=f^k(\eta_j).
$$
Estimate \eqref{eq:A_defecto} remains valid
on each $B_j$, including the step that closes the cycle.

Define
$$
Y_0=
\left(\bigsqcup_{i=1}^mY_i\right)
\sqcup
\left(\bigsqcup_{j=1}^{\ell}B_j\right)
$$
with the disjoint union topology.
Since it is a finite sum of compact metrizable spaces,
$Y_0$ is compact, metrizable, and countable.
The preceding definitions yield a homeomorphism
$g_0:Y_0\to Y_0$ and a continuous map $q_0:Y_0\to X$
whose image is $\theta$-dense and which satisfies
\eqref{eq:A_defecto} throughout $Y_0$.

We now determine the invariant measures of this model.
Let $\nu\in\mathcal M_{g_0}(Y_0)$.
For each $i$, invariance implies
$$
\nu(\{u_{i,n}\})
=\nu(\{u_{i,n+1}\})
\qquad(n\in\mathbb Z).
$$
All points of this infinite orbit therefore have
the same mass, which must be zero.
Since $\bigcup_{i=1}^m\Lambda_i$ is countable, every invariant
measure is concentrated on the finite union of the cycles
$C_i^\pm$ and $B_j$.

If $\nu_C$ denotes the uniform measure on a cycle $C$,
then
\begin{equation}\label{eq:A_medidas-ciclos}
\begin{aligned}
\mathbb E_{g_0}(Y_0)
&=\{\nu_C:C\text{ is a cycle in }Y_0\},\\
\mathcal M_{g_0}(Y_0)
&=\operatorname{conv}\mathbb E_{g_0}(Y_0).
\end{aligned}
\end{equation}
All these measures have zero measure-theoretic entropy.
By the variational principle \cite{walters1982},
\begin{equation}\label{eq:A_h-cero}
h_{\rm top}(g_0)=0.
\end{equation}

For the cycles $C_i^\pm$,
$$
q_{0*}\nu_{C_i^\pm}
=\mathsf e_{p_i^\pm}^f(\xi_i^\pm),
$$
and by \eqref{eq:A_empirica},
$$
d_{\rm BL}
\bigl(q_{0*}\nu_{C_i^\pm},\mu_i^\pm\bigr)<\theta.
$$
Likewise,
$$
q_{0*}\nu_{B_j}
=\mathsf e_{m_j}^f(\eta_j),
\qquad
d_{\rm BL}
\bigl(q_{0*}\nu_{B_j},\beta_j\bigr)<\theta.
$$
Every measure in $\mathbb E_f(X)$ lies at distance less than $\theta$
from some $\beta_j$, and hence at distance less than $2\theta$
from $q_{0*}\nu_{B_j}$. Thus,
\begin{equation}\label{eq:A_ergodicas}
d_H^{\rm BL}
\bigl(q_{0*}\mathbb E_{g_0}(Y_0),\mathbb E_f(X)\bigr)
\leq2\theta.
\end{equation}

Since
$$
\mathcal M_f(X)
=\overline{\operatorname{conv}}\mathbb E_f(X),
$$
the convexity inequality for $d_{\rm BL}$ preserves
the same bound when taking convex combinations
and closures. Moreover,
$q_{0*}\mathcal M_{g_0}(Y_0)$ is the convex hull
of a finite set and is therefore compact.
By \eqref{eq:A_medidas-ciclos}, we obtain
\begin{equation}\label{eq:A_invariantes}
d_H^{\rm BL}
\bigl(q_{0*}\mathcal M_{g_0}(Y_0),\mathcal M_f(X)\bigr)
\leq2\theta.
\end{equation}
In particular,
$$
\Delta_{\rm BL}^{q_0}(g_0,f)\leq2\theta.
$$

\medskip
\noindent\emph{Choice of the metric.}
Choose a compatible metric $\rho$ on $Y_0$
with diameter at most $1$, and define
\begin{equation}\label{eq:A_metrica}
d_0(u,v)
=d_X\bigl(q_0(u),q_0(v)\bigr)+\theta\rho(u,v).
\end{equation}
This metric induces the topology of $Y_0$:
the inequality $d_0\geq\theta\rho$ gives one
implication between the notions of convergence, and continuity
of $q_0$ gives the other.
Moreover,
\begin{equation}\label{eq1}
\dis(q_0)\leq\theta,
\qquad
\sup_{x\in X}d_X(x,q_0(Y_0))<\theta.
\end{equation}

For each $x\in X$, choose an index
$i(x)\in\{1,\ldots,m\}$ such that
$d_X(x,x_{i(x)})<\theta$ and define
$$
s_0(x)=u_{i(x),0}.
$$
The map $s_0:X\to Y_0$ need not be continuous.
For $x,x'\in X$, we have
$$
\begin{aligned}
\left|
d_0\bigl(s_0(x),s_0(x')\bigr)-d_X(x,x')
\right|
&\leq d_X(x_{i(x)},x)
+d_X(x_{i(x')},x')+\theta\\
&\leq3\theta.
\end{aligned}
$$
On the other hand, for each $u\in Y_0$,
$$
d_0\bigl(u,s_0(q_0(u))\bigr)\leq2\theta.
$$
Consequently,
\begin{equation}\label{eq2}
\dis(s_0)\leq3\theta,
\qquad
\sup_{u\in Y_0}d_0(u,s_0(X))\leq2\theta.
\end{equation}

Finally,
$$
\sup_{x\in X}
d_0\bigl(g_0(s_0(x)),s_0(f(x))\bigr)
\leq\omega_f(\theta)+2\theta.
$$
Indeed, fix $x\in X$ and set
$i=i(x)$ and $j=i(f(x))$.
Since $L_i^+\geq2$, we have
$$
q_0\bigl(g_0(s_0(x))\bigr)=f(x_i),
\qquad
q_0\bigl(s_0(f(x))\bigr)=x_j.
$$
Therefore,
$$
d_X(f(x_i),x_j)
\leq\omega_f(\theta)+\theta.
$$
The term $\theta\rho$ contributes at most
$\theta$, proving the estimate. By \eqref{eq:A_defecto},
$$
\sup_{y\in Y_0}
d_X\bigl(q_0(g_0(y)),f(q_0(y))\bigr)\leq2\theta.
$$
All the metric and dynamical bounds obtained,
as well as the bound on $\Delta_{\rm BL}^{q_0}(g_0,f)$,
are less than $\delta$ by
\eqref{eq:A_eleccion_theta}. This completes the proof.
\end{proof}

\begin{proof}[Proof of Theorem~\ref{thm:A}]
\noindent\emph{Implication $(1)\Rightarrow(2)$.}
Suppose that $f$ is surjective and fix
$\varepsilon>0$ and $t\in[0,+\infty]$.
Apply Lemma~\ref{lem:modelo_numerable} with
$\delta=\varepsilon/4$, and let
$(Y_0,d_0)$, $g_0$, $q_0$, and $s_0$ be the resulting objects.

Take $(W_t,T_t,*)$ from
Lemma~\ref{lem:sistema_auxiliar_entropia}, with a metric
$d_{W_t}$ of diameter at most $1$. Define
$$
Y=Y_0\times W_t,
\qquad
g=g_0\times T_t,
$$
and the maps
$$
q(u,w)=q_0(u),
\qquad
s(x)=(s_0(x),*).
$$
Equip $Y$ with the metric
$$
d_Y\bigl((u,w),(v,w')\bigr)
=d_0(u,v)+\delta d_{W_t}(w,w').
$$
The space $Y$ is compact, $g$ is a homeomorphism,
and $q$ is continuous.

Estimates (\ref{eq1}) and (\ref{eq2}) in Lemma~\ref{lem:modelo_numerable} and the bound on the diameter
of $W_t$ give
$$
\dis(q)\leq2\delta,
\qquad
\sup_{x\in X}d_X(x,q(Y))<\delta,
$$
and
$$
\dis(s)<\delta,
\qquad
\sup_{y\in Y}d_Y(y,s(X))\leq3\delta.
$$
Since $T_t(*)=*$, we also have
$$
\sup_{y\in Y}
d_X\bigl(q(g(y)),f(q(y))\bigr)<\delta,
$$
$$
\sup_{x\in X}
d_Y\bigl(g(s(x)),s(f(x))\bigr)<\delta.
$$
Thus, $q$ and $s$ are $3\delta$-isometries
satisfying the conditions in the definition of $d_{GH^0}$.
Consequently,
$$
d_{GH^0}(f,g)\leq3\delta<\varepsilon,
$$
and $q$ is a continuous $\varepsilon$-isometry.

By the product formula for entropy,
$$
h_{\rm top}(g)
=h_{\rm top}(g_0)+h_{\rm top}(T_t)=t.
$$
If $t=0$, take $W_0=\{*\}$, so $Y$ is countable.

Now let $\pi:Y\to Y_0$ be the projection onto the first
factor. This is a factor map, whereas
$$
\iota:Y_0\to Y,\qquad \iota(u)=(u,*),
$$
is an equivariant embedding with
$\pi\circ\iota=\operatorname{id}_{Y_0}$.
The projection of an invariant or ergodic measure preserves
the corresponding property. Conversely, $\iota_*\nu$
is an invariant or ergodic lift of each measure
$\nu$ of the same type. Therefore,
$$
\pi_*\mathcal M_g(Y)=\mathcal M_{g_0}(Y_0),
\qquad
\pi_*\mathbb E_g(Y)=\mathbb E_{g_0}(Y_0).
$$
Since $q=q_0\circ\pi$, we obtain
$$
\Delta_{\rm BL}^{q}(g,f)
=\Delta_{\rm BL}^{q_0}(g_0,f)
<\delta<\varepsilon.
$$
This proves the first implication.

\medskip
\noindent\emph{Implication $(2)\Rightarrow(1)$.}
Suppose that (2) holds and that $f$ is not surjective.
Since $f(X)$ is compact, there is $x_*\in X$ such that
$$
\kappa=d_X(x_*,f(X))>0.
$$
Applying (2) with $t=0$ and $\varepsilon=\kappa/3$,
and using the definition of $d_{GH^0}$, we obtain
a homeomorphism $g:Y\to Y$ and a map
$j:Y\to X$ whose image is $\kappa/3$-dense and which satisfies
$$
\sup_{y\in Y}
d_X\bigl(j(g(y)),f(j(y))\bigr)<\kappa/3.
$$
Choose $y\in Y$ with $d_X(x_*,j(y))<\kappa/3$.
By surjectivity of $g$, there is $z\in Y$
such that $g(z)=y$. Then
$$
\begin{aligned}
\kappa
&\leq d_X\bigl(x_*,f(j(z))\bigr)\\
&\leq d_X(x_*,j(y))
+d_X\bigl(j(g(z)),f(j(z))\bigr)
<\frac{2\kappa}{3},
\end{aligned}
$$
a contradiction. Thus, $f$ is surjective.
\end{proof}

In Theorem~\ref{thm:A}, when $t=0$, the approximating space
can be chosen to be compact and countable. For topologically
transitive homeomorphisms, the construction in
\cite[proof of Theorem~3.1.3]{cubas2018} yields
approximations in the $GH^0$ distance by dynamical systems on
finite spaces; see also \cite{jung2019} for results related
to approximation by periodic orbits. In general, however,
the approximating space cannot be required to be finite.
The following example shows this obstruction, even for
a homeomorphism with positive entropy.

\begin{example}
\label{ex:finito_no_basta}
Let $A:\mathbb T^2\to\mathbb T^2$ be the toral automorphism
induced by the matrix
$$
\begin{pmatrix}
2&1\\
1&1
\end{pmatrix}.
$$
On $X=\mathbb T^2\times[0,1]$, consider the metric
$$
d_X\bigl((u,s),(v,t)\bigr)
=\max\{d_{\mathbb T^2}(u,v),|s-t|\},
$$
and define the homeomorphism $F(u,s)=(A(u),s^2)$. Since the map $s\mapsto s^2$ has zero topological entropy,
$$
h_{\rm top}(F)
=h_{\rm top}(A)
=\log\left(\frac{3+\sqrt5}{2}\right)>0.
$$

We claim that $d_{GH^0}(F,g)\geq10^{-2}$ for every homeomorphism $g:Y\to Y$ with $Y$ finite.
Indeed, suppose otherwise. Then there exist
$0<\delta<10^{-2}$ and a $\delta$-isometry $j:Y\to X$
such that
$$
d_X\bigl(j(g(y)),F(j(y))\bigr)<\delta
\qquad(y\in Y).
$$
Since $j(Y)$ is $\delta$-dense, for any fixed $u\in\mathbb T^2$
there is $y_0\in Y$ such that $d_X\bigl(j(y_0),(u,1/2)\bigr)<\delta$. Write $j(g^n(y_0))=(u_n,s_n)$. Then $0.49<s_0<0.51$ and, for every $n\geq0$,
$$
|s_{n+1}-s_n^2|<\delta.
$$
Therefore, $s_1<0.51^2+0.01<0.3$. Moreover, if $s_n<0.3$, then
$$
s_{n+1}<0.3^2+0.01<0.3.
$$
Thus, $s_n<0.3$ for every $n\geq1$.
Since $Y$ is finite and $g$ is a homeomorphism,
$y_0$ is periodic. There is $N\geq1$ such that
$g^N(y_0)=y_0$, and hence $s_N=s_0$.
This contradicts $s_N<0.3<s_0$.

We conclude that no homeomorphism of a finite space
can approximate $F$ within a distance less than $10^{-2}$.
Nevertheless, Theorem~\ref{thm:A} provides countable
compact zero-entropy models arbitrarily
close to $F$.
The obstruction comes from the second coordinate;
the torus factor allows us to exhibit it in a system
with positive entropy.
\end{example}
\section{Proof of Theorem~\ref{thm:B} }\label{sec:rigidez}

We prove Theorem~\ref{thm:B} by establishing that sufficiently
close perturbations within the fixed geometric class have the
reference system as a factor. Surjectivity of the factor
map follows from Mitsuishi and Yamaguchi's homotopy stability
theorem for noncollapsing Alexandrov spaces, applied to the
continuous approximation provided by $GH$-stability. A homology
argument treats all dimensions uniformly.
The entropy inequality and the equalities concerning measures follow from
the existence of the factor. We begin with the following measure-lifting lemma, which will be used at the end of the proof.

\begin{lemma}
\label{lem:levantamiento_factor}
Let $f:X\to X$ and $g:Y\to Y$ be continuous maps. If
$\pi:Y\to X$ is continuous and surjective, and $\pi\circ g=f\circ\pi$,
then
$$
\pi_*\mathcal M_g(Y)=\mathcal M_f(X),
\qquad
\pi_*\mathbb{E}_g(Y)=\mathbb{E}_f(X).
$$
\end{lemma}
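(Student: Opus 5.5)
We will prove the two equalities by first establishing the inclusions $\pi_*\mathcal M_g(Y)\subset\mathcal M_f(X)$ and $\pi_*\mathbb E_g(Y)\subset\mathbb E_f(X)$. Both are immediate from equivariance. If $g_*\nu=\nu$, then
$$
f_*\pi_*\nu=(f\circ\pi)_*\nu=(\pi\circ g)_*\nu=\pi_*\nu .
$$
If $\nu$ is ergodic and $B\subset X$ is a Borel set with $f^{-1}B=B$, then $\pi^{-1}B$ is $g$-invariant. Hence
$$
\pi_*\nu(B)=\nu(\pi^{-1}B)\in\{0,1\}.
$$

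For the reverse inclusion on invariant measures, fix $\mu\in\mathcal M_f(X)$ and consider the fibre set
$$
F_\mu=\{\nu\in\mathcal M_g(Y):\pi_*\nu=\mu\}.
$$
We show $F_\mu\neq\emptyset$ as follows.
\begin{enumerate}
\item Since $\pi$ is surjective, the map $\pi_*:\mathbb P(Y)\to\mathbb P(X)$ is surjective. Point masses lift, so finite convex combinations of point masses lift. These combinations are dense in $\mathbb P(X)$, and $\pi_*$ is continuous with compact image. Alternatively, one can use Hahn--Banach or a measurable selection of preimages.
\item Choose $\lambda\in\mathbb P(Y)$ with $\pi_*\lambda=\mu$, and form the Cesàro averages
$$
\lambda_N=\frac1N\sum_{k<N}g^k_*\lambda .
$$
Each satisfies $\pi_*\lambda_N=\frac1N\sum_{k<N}f^k_*\mu=\mu$.
\item Any weak limit point $\nu$ of $(\lambda_N)$ is $g$-invariant, by the same telescoping argument as in Lemma~\ref{lem:recurrentes}. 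By continuity of $\pi_*$, it satisfies $\pi_*\nu=\mu$.
\end{enumerate}

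For ergodic measures, fix $\mu\in\mathbb E_f(X)$. The set $F_\mu$ is nonempty, convex, and weakly closed (by continuity of $\pi_*$), hence compact. By Krein--Milman it has an extreme point $\nu$. We claim that $\nu$ is extreme in $\mathcal M_g(Y)$, and hence ergodic. Suppose
$$
\nu=t\nu_1+(1-t)\nu_2,\qquad \nu_i\in\mathcal M_g(Y),\quad 0<t<1 .
$$
Then $\mu=t\pi_*\nu_1+(1-t)\pi_*\nu_2$, and both $\pi_*\nu_i$ lie in $\mathcal M_f(X)$. Since $\mu$ is an extreme point of $\mathcal M_f(X)$, we get $\pi_*\nu_i=\mu$, so $\nu_i\in F_\mu$. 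Extremality of $\nu$ in $F_\mu$ then forces $\nu_1=\nu_2=\nu$.

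The main obstacle is the surjectivity of $\pi_*$ on $\mathbb P(\cdot)$, that is, lifting a single non-invariant measure. We would handle it by the density argument: the image $\pi_*\mathbb P(Y)$ is compact and contains all finitely supported measures, which are dense in $\mathbb P(X)$. The remaining steps are routine.
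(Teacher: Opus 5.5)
Your proof is correct, and it differs from the paper's at both nontrivial steps.

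\textbf{Lifting a single measure.} The paper uses surjectivity of $\pi$ to make $\varphi\mapsto\varphi\circ\pi$ an isometric embedding of $C(X)$ into $C(Y)$. It extends $\varphi\circ\pi\mapsto\int_X\varphi\,d\mu$ by Hahn--Banach with norm one, notes that such an extension taking the value $1$ on constants is positive, and applies Riesz. You use surjectivity only to lift Dirac masses, and then conclude via density of finitely supported measures and compactness of $\pi_*\mathbb P(Y)$. This is more elementary and avoids the positivity check.

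\textbf{Ergodic lifts.} The paper takes an invariant lift $\nu$ and pushes its ergodic decomposition forward. Uniqueness of the ergodic decomposition of $\mu$ then shows that $\tau_\nu$-almost every ergodic component of $\nu$ projects to $\mu$. You instead observe that the fibre $F_\mu$ is a nonempty compact face of $\mathcal M_g(Y)$, because $\mu$ is extreme in $\mathcal M_f(X)$. Hence any extreme point of $F_\mu$ given by Krein--Milman is extreme in $\mathcal M_g(Y)$, and therefore ergodic.

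\textbf{Comparison.} Your route needs only Krein--Milman and the extreme-point characterization of ergodic measures, which the paper already records in the preliminaries. The paper's route relies on existence and uniqueness of ergodic decompositions. In exchange, it gives a slightly stronger statement: every invariant lift of $\mu$ is an integral of ergodic lifts of $\mu$. Your face argument recovers the closed-convex-hull version of this, since the extreme points of $F_\mu$ are exactly the ergodic lifts of $\mu$.
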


\begin{proof}
The semiconjugacy relation implies
$\pi_*\mathcal M_g(Y)\subset\mathcal M_f(X)$. Fix
$\mu\in\mathcal M_f(X)$. Since $\pi$ is surjective, the map
$$
C(X)\to C(Y),
\qquad
\varphi\mapsto\varphi\circ\pi,
$$
is a linear isometry. The functional
$$
\varphi\circ\pi\mapsto\int_X\varphi\,d\mu
$$
has norm one and takes the value one on the constant function. The
Hahn--Banach theorem and the Riesz representation theorem yield a measure
$\lambda\in\mathbb{P}(Y)$ with $\pi_*\lambda=\mu$. Indeed, a norm-one extension
that takes the value one on the constant function is positive. Now consider $\lambda_m=\dfrac1m\sum_{k=0}^{m-1}g_*^k\lambda$. Every accumulation point $\nu$ of $(\lambda_m)$ is $g$-invariant, and
$$
\pi_*\lambda_m
=\frac1m\sum_{k=0}^{m-1}f_*^k\mu=\mu.
$$
Therefore, $\pi_*\nu=\mu$, proving the first equality.

An ergodic measure projects to an ergodic measure under a factor map, so
$\pi_*\mathbb{E}_g(Y)\subset\mathbb{E}_f(X)$. Let
$\mu\in\mathbb{E}_f(X)$ and choose
$\nu\in\mathcal M_g(Y)$ with $\pi_*\nu=\mu$. If $\tau_\nu$ is the
ergodic decomposition of $\nu$, then
$$
\mu=\int_{\mathbb{E}_g(Y)}\pi_*\xi\,d\tau_\nu(\xi).
$$
Since $\mu$ is ergodic, its ergodic decomposition is the point mass
at $\mu$. By uniqueness of the ergodic decomposition applied to the
preceding equality, $\pi_*\xi=\mu$ for $\tau_\nu$-almost every $\xi$.
In particular, $\mu$ has an ergodic lift.
\end{proof}

We next state the homotopy stability result
of Mitsuishi and Yamaguchi. To state it, given $n\geq1$ and $D,v>0$,
denote by $\mathcal A(n,D,v)$ the class of compact Alexandrov
spaces of dimension $n$, with curvature bounded below
by $-1$, diameter at most $D$, and $n$-dimensional volume
at least $v$. In this setting, an $\varepsilon$-approximation
is a map, not necessarily continuous, with distortion
less than $\varepsilon$ and $\varepsilon$-dense image.
We present the result in the notation to be used
in the proof.

\begin{theorem}[Mitsuishi--Yamaguchi,
{\cite[Theorem~1.2]{mitsuishiYamaguchi2019}}]
\label{thm:MY}
For fixed $n,D,v$, there exists $\varepsilon_*>0$ such that,
if $M,M'\in\mathcal A(n,D,v)$,
$0<\varepsilon<\varepsilon_*$, and $d_{GH}(M,M')<\varepsilon$,
every $\varepsilon$-approximation $\theta:M\to M'$
admits a Lipschitz homotopy equivalence
$u:M\to M'$ satisfying
$$
\sup_{x\in M}d_{M'}\bigl(u(x),\theta(x)\bigr)
\leq\tau(\varepsilon),
\qquad
\tau(\varepsilon)\longrightarrow0
\quad\text{as }\varepsilon\longrightarrow0.
$$
\end{theorem}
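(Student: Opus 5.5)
Theorem~\ref{thm:MY} is quoted from \cite{mitsuishiYamaguchi2019} and is used here as a black box. If I had to reprove it, I would follow the Grove--Petersen strategy for homotopy finiteness, upgraded to Lipschitz maps. Everything rests on one geometric input. There would be a function $\rho(r)\to0$ as $r\to0$, depending only on $n,D,v$, such that every metric ball $B(x,r)$ in any $M\in\mathcal A(n,D,v)$ contracts inside $B(x,\rho(r))$ by a Lipschitz homotopy with Lipschitz constant controlled uniformly in terms of $r$. I expect this uniform local Lipschitz contractibility to be the main obstacle, and I would not try to prove it from scratch. The plan is to derive it from the structure theory of noncollapsed Alexandrov spaces. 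Perelman's stability theorem and the conical neighborhood theorem make small balls uniformly bi-Lipschitz-like cones over their spaces of directions, and the volume bound $v$ rules out collapse, so no uniformity is lost. A contradiction argument via Gromov precompactness of $\mathcal A(n,D,v)$ then upgrades the pointwise statement to a uniform one.

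Granting this, I would fix a small scale $r$ and a finite $r$-net $\{a_k\}$ in $M$. Let $N$ be the nerve of the cover by the balls $B(a_k,2r)$, and let $\Phi:M\to N$ be the Lipschitz map given by a partition of unity subordinate to that cover. Next I would build a Lipschitz map $\Psi:N\to M'$ skeleton by skeleton. Vertices go to $\theta(a_k)$. Each higher simplex has its vertices in a ball of radius $O(r+\varepsilon)$ in $M'$, and I would extend over it using the local Lipschitz contraction of that ball. Induction on dimension, at most $n$ steps, gives a Lipschitz extension whose image on each simplex stays inside a ball of radius $\rho^{(n)}(Cr+C\varepsilon)$. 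Set $u=\Psi\circ\Phi$. Then $u$ is Lipschitz and $\sup_x d_{M'}(u(x),\theta(x))\leq\tau(\varepsilon)$ once $r$ is chosen comparable to $\varepsilon$.

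Since $\theta$ has an approximate inverse $\bar\theta:M'\to M$, which is also an $O(\varepsilon)$-approximation, the same construction applied to $\bar\theta$ produces a Lipschitz map $w:M'\to M$. Both $w\circ u$ and $u\circ w$ are then uniformly $\tau(\varepsilon)$-close to the identity maps. The remaining step is the standard fact that, in a uniformly locally Lipschitz contractible space, two maps that are uniformly close enough are Lipschitz homotopic. I would prove it by the same skeleton-wise extension, now applied on the product of a fine triangulation with the interval $[0,1]$. This shows $w\circ u\simeq\mathrm{id}_M$ and $u\circ w\simeq\mathrm{id}_{M'}$, so $u$ is a Lipschitz homotopy equivalence. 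The threshold $\varepsilon_*$ is chosen so that every radius occurring in the argument stays below the scale at which the contractibility function $\rho$ is valid.
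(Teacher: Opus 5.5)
Treating Theorem~\ref{thm:MY} as a black box is exactly what the paper does: it gives no proof and only applies the result to the rescaled Riemannian manifolds $cY_j$ and $cX$. Your optional reproof, however, has a genuine gap at its foundation. Perelman's stability theorem and the conical neighborhood theorem are purely topological. They provide homeomorphisms, either between Gromov--Hausdorff close spaces or from a neighborhood of a point onto its tangent cone, with no Lipschitz control, and no Lipschitz version of them is available. This is precisely why a Lipschitz \emph{homotopy} stability theorem is a result in its own right. So these theorems do not make small balls ``bi-Lipschitz-like cones''. Your compactness-by-contradiction step fails for the same reason: transporting a Lipschitz contraction from a limit space $M_\infty$ to nearby spaces $M_i$ would require Lipschitz control of the stability homeomorphisms. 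That an Alexandrov space is locally Lipschitz contractible at all is itself a nontrivial theorem of Mitsuishi and Yamaguchi, so your sketch presupposes the real work. (Uniform Lipschitz constants are in fact more than you need to build $\Psi$ on the finite complex $N$. A uniform topological contractibility function, which Perelman's theorem does give on $\mathcal A(n,D,v)$, together with Lipschitz local contractibility of each individual target with non-uniform constants, would suffice. But that pointwise Lipschitz input is exactly what is missing.)

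The homotopy step has a second gap. You extend skeleton-wise over a fine triangulation of $M\times[0,1]$. However, general Alexandrov spaces are not known to be triangulable, let alone with Lipschitz simplices, which is what the cone construction requires. You also cannot route this step through the nerve, because $w\circ u$ factors through $N$ but $\mathrm{id}_M$ does not. Some structural substitute on $M$ is needed, for instance a Lipschitz neighborhood retraction of $M$ in a Banach space, or a Whitney-type relative extension over $M\times(0,1)$. Both rest on Lipschitz contractions controlled at all small scales, not at one fixed scale. A minor point: once the net is taken $r$-separated, the nerve of $\{B(a_k,2r)\}$ has dimension bounded by a Bishop--Gromov packing constant $C(n)$, not by $n$. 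This is harmless, since the number of iterations of $\rho$ stays uniform. Finally, both gaps disappear in the setting where the paper actually uses the theorem, the class $\mathcal R(n,K,i_0,D)$. There, radial geodesic contraction of balls of radius below $\min\{i_0/2,\pi/(2\sqrt K)\}$ gives uniformly Lipschitz contractions with $\rho(r)=r$. Uniformly close Lipschitz maps are then Lipschitz homotopic through minimizing geodesics, as in the paper's proof that $h_j\simeq u_j$. In that setting your Grove--Petersen scheme would yield a self-contained replacement for the citation.
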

We shall apply this result to show that the maps
provided by $GH$-stability are homotopic to
homotopy equivalences and, through a homology argument,
deduce their surjectivity.

\begin{proof}[Proof of Theorem~\ref{thm:B}]
We first prove that, for sufficiently close perturbations
within the class $\mathcal R(n,K,i_0,D)$, there is
a factor map onto $(X,f)$. Suppose, by
contradiction, that no $\delta>0$ has this property.

For each $j\geq1$, set $\varepsilon_j=1/j$.
By the $GH$-stability of $f$, there is $\eta_j>0$ such that
every homeomorphism $g:Y\to Y$ with
$d_{GH^0}(f,g)<\eta_j$ admits a
continuous $\varepsilon_j$-isometry $h:Y\to X$ satisfying
$f\circ h=h\circ g$. By our assumption, we can choose
$Y_j\in\mathcal R(n,K,i_0,D)$ and a homeomorphism
$g_j:Y_j\to Y_j$ such that
$$
d_{GH^0}(f,g_j)<\min\{\eta_j,1/j\},
$$
but $(X,f)$ is not a factor of $(Y_j,g_j)$.
Stability then provides a
continuous $\varepsilon_j$-isometry $h_j:Y_j\to X$ such that
$$
f\circ h_j=h_j\circ g_j,
\qquad
\dis(h_j)<\varepsilon_j,
\qquad
\sup_{x\in X}d_X(x,h_j(Y_j))<\varepsilon_j.
$$
Since $(X,f)$ is not a factor of $(Y_j,g_j)$,
the map $h_j$ cannot be surjective.

We now show that the volumes of manifolds in $\mathcal R(n,K,i_0,D)$
are uniformly bounded below.
If $n\geq2$, fix
$$
r_0=
\min\left\{
\frac{i_0}{2},
\frac{\pi}{2\sqrt K}
\right\}.
$$
For every $M\in\mathcal R(n,K,i_0,D)$, we have
$r_0<\inj(M)$ and $\sec_M\leq K$.
By Günther's volume comparison theorem
\cite[Theorem~11.14]{lee2018riemannian},
for any $p\in M$,
$$
\operatorname{vol}_n(B_M(p,r_0))
\geq
\operatorname{vol}(\mathbb S^{n-1})
\int_0^{r_0}
\left(
\frac{\sin(\sqrt K\,s)}{\sqrt K}
\right)^{n-1}ds,
$$
where $\mathbb S^{n-1}$ is the unit sphere.
The right-hand side depends only on $n$, $K$,
and $i_0$ and is strictly positive. Since $\operatorname{vol}_n(M)
\geq\operatorname{vol}_n(B_M(p,r_0))$, we obtain a uniform lower bound on volume.\\

If $n=1$, each $M$ is a circle and $\operatorname{vol}_1(M)=2\inj(M)\geq2i_0$. Therefore, in every dimension there is a constant $v_0=v_0(n,K,i_0)>0$
such that
$$
\operatorname{vol}_n(M)\geq v_0
\qquad
\text{for every }M\in\mathcal R(n,K,i_0,D).
$$
Let $c=\max\{1,\sqrt K\}$, and denote by $cM$ the manifold
$M$ with its distance multiplied by $c$.
Under this rescaling,
$$
\diam(cM)=c\,\diam(M),
\qquad
\operatorname{vol}_n(cM)=c^n\operatorname{vol}_n(M).
$$
Moreover, for $n\geq2$,
$$
\sec_{cM}=c^{-2}\sec_M\geq-K/c^2\geq-1.
$$
In dimension one, circles also satisfy the curvature
lower bound in the sense of Alexandrov.
Consequently,
$$
cX,\ cY_j\in\mathcal A(n,cD,c^nv_0).
$$
With respect to the rescaled metrics, $h_j:cY_j\to cX$
satisfies
$$
\dis(h_j)<c\varepsilon_j,
\qquad
\sup_{x\in cX}d_{cX}(x,h_j(cY_j))<c\varepsilon_j.
$$
Thus, $h_j$ is a $c\varepsilon_j$-approximation.
The standard estimate relating approximations to the
Gromov--Hausdorff distance gives
$$
d_{GH}(cY_j,cX)\leq2c\varepsilon_j.
$$

Let $\varepsilon_*>0$ be the threshold in Theorem~\ref{thm:MY}
for the class $\mathcal A(n,cD,c^nv_0)$, and set $\alpha_j=4c\varepsilon_j$.
For all sufficiently large $j$,
$\alpha_j<\varepsilon_*$; moreover, $h_j$ is an
$\alpha_j$-approximation and
$$
d_{GH}(cY_j,cX)<\alpha_j.
$$
Apply Theorem~\ref{thm:MY} with
$$
M=cY_j,\qquad M'=cX,\qquad
\theta=h_j,\qquad \varepsilon=\alpha_j.
$$
We obtain a Lipschitz homotopy equivalence $u_j:cY_j\longrightarrow cX$ such that
$$
\sup_{y\in Y_j}
d_{cX}\bigl(u_j(y),h_j(y)\bigr)
\leq\tau(\alpha_j), \text{ with }\tau(\alpha_j)\to 0.
$$
Rescaling does not change the topologies, so
$u_j:Y_j\to X$ is also a homotopy equivalence
for the original metrics. Moreover,
$$
\sup_{y\in Y_j}
d_X\bigl(u_j(y),h_j(y)\bigr)
\leq\frac{\tau(\alpha_j)}{c}
\longrightarrow0.
$$
Since $\inj(X)\geq i_0$, for all sufficiently large $j$,
$$
d_X\bigl(u_j(y),h_j(y)\bigr)<i_0
\qquad\text{for every }y\in Y_j.
$$
Thus, $u_j(y)$ and $h_j(y)$ are joined by a unique
minimizing geodesic, which depends continuously on $y$.
These geodesics define a homotopy between $u_j$ and $h_j$.
Hence,
$$
h_j\simeq u_j.
$$

Since $u_j$ is a homotopy equivalence, the induced map
on singular homology
$$
(u_j)_*:
H_n(Y_j;\mathbb Z_2)
\longrightarrow H_n(X;\mathbb Z_2)
$$
is an isomorphism. Homotopy invariance of homology
implies
$$
(h_j)_*=(u_j)_*,
$$
so $(h_j)_*$ is also an isomorphism.
Moreover, since $X$ is a closed, connected manifold
of dimension $n$, $H_n(X;\mathbb Z_2)\cong\mathbb Z_2$; see \cite[Theorem~3.26]{hatcher2002}. However, $h_j$ is not surjective.
We can choose $p_j\in X\setminus h_j(Y_j)$, so
that $h_j$ factors as
$$
Y_j
\xrightarrow{\;\widehat h_j\;}
X\setminus\{p_j\}
\xrightarrow{\;\iota_j\;}
X,
$$
where $\iota_j$ is the inclusion.
The space $X\setminus\{p_j\}$ is a connected,
noncompact manifold of dimension $n$, and hence
$$
H_n(X\setminus\{p_j\};\mathbb Z_2)=0;
$$
see \cite[Proposition~3.29]{hatcher2002}.
Consequently,
$$
(h_j)_*
=(\iota_j)_*\circ(\widehat h_j)_*
=0,
$$
which contradicts the fact that $(h_j)_*$ is an isomorphism
onto the nonzero group $H_n(X;\mathbb Z_2)$.

This contradiction proves that there exists $\delta>0$ such that,
if $Y\in\mathcal R(n,K,i_0,D)$ and $g:Y\to Y$ is a
homeomorphism with $d_{GH^0}(f,g)<\delta$, there is a
continuous surjective map $h:Y\to X$ satisfying
$$
f\circ h=h\circ g.
$$
Thus, $(X,f)$ is a factor of $(Y,g)$.
Monotonicity of topological entropy under factors implies
$$
h_{\rm top}(g)\geq h_{\rm top}(f).
$$

Finally, applying Lemma~\ref{lem:levantamiento_factor}
to the same map $h$, we obtain
$$
h_*\mathcal M_g(Y)=\mathcal M_f(X),
\qquad
h_*\mathbb E_g(Y)=\mathbb E_f(X).
$$
\end{proof}
\begin{remark}
A continuous map with small distortion and an image that is dense
at the scale of the approximation need not be surjective.
For example, write $\mathbb S^1=\mathbb R/\mathbb Z$ with
its length metric, and denote by
$\mathbb S^1_{\varepsilon_j}$ the same circle with its distance
multiplied by $\varepsilon_j>0$. Consider
$$
M_j=N_j=\mathbb S^1\times\mathbb S^1_{\varepsilon_j},
\qquad \varepsilon_j\longrightarrow0,
$$
with the Riemannian product metric. These are flat tori,
that is, tori whose Gaussian curvature vanishes identically.
Equivalently, they are obtained by identifying opposite sides
of a Euclidean rectangle with side lengths $1$ and $\varepsilon_j$.

The maps
$$
h_j:M_j\to N_j,\qquad h_j(x,y)=(x,0),
$$
where $0$ denotes the class of zero in the second circle,
are continuous and not surjective. If
$a_j=\diam(\mathbb S^1_{\varepsilon_j})=\varepsilon_j/2$,
then
$$
\dis(h_j)\leq a_j,
\qquad
\sup_{z\in N_j}d_{N_j}(z,h_j(M_j))\leq a_j,
$$
and both quantities tend to zero. At the same time,
$$
\operatorname{vol}_2(M_j)=\varepsilon_j\longrightarrow0,
\qquad
\inj(M_j)=\frac12\min\{1,\varepsilon_j\}\longrightarrow0.
$$

This example shows that these two metric controls alone
do not provide a uniform surjectivity criterion
for collapsing families. It does not establish the necessity of the
geometric assumptions in Theorem~\ref{thm:B}, since the target
space also varies here. In the proof of the theorem,
surjectivity follows from homotopy control
of the approximation and top-dimensional homology.
\end{remark}
\begin{remark}[Geometric bounds do not suffice]
The factor persistence conclusion in
Theorem~\ref{thm:B} may fail if $GH$-stability is omitted,
even when all the dynamical systems are defined on the same
manifold. Indeed, let
$f=\operatorname{id}_{\mathbb S^1}$ and let $R_{\theta_j}$ be irrational
rotations with $\theta_j\to0$. Then
$$
d_{GH^0}(f,R_{\theta_j})\longrightarrow0.
$$
If a continuous map $h:\mathbb S^1\to\mathbb S^1$
satisfied
$$
h\circ R_{\theta_j}=h,
$$
it would be constant on each orbit. Since the orbits of
$R_{\theta_j}$ are dense, $h$ would be constant and could not
be surjective. Thus, geometric restrictions
alone do not guarantee the existence of the factor.
\end{remark}

The following example shows how the two theorems apply
to the same system, depending on whether arbitrary perturbations
are allowed or perturbations are restricted to a fixed geometric class.

\begin{example}\label{ex:gato}
This example illustrates how Theorems~\ref{thm:A} and~\ref{thm:B}
lead to different entropy behavior depending on
the restrictions imposed on the phase spaces of the perturbations. Consider the automorphism $f_A:\mathbb T^2\to\mathbb T^2$ of the torus
induced by
$$
A=
\begin{pmatrix}
2&1\\
1&1
\end{pmatrix}.
$$
Its eigenvalues are $\lambda_\pm=\dfrac{3\pm\sqrt5}{2}$, and $h_{\rm top}(f_A)=\log\lambda_+>0$. Since $f_A$ is Anosov, it is expansive and has the
shadowing property \cite{aokiHiraide1994}; hence, it is $GH$-stable
\cite[Theorem~4]{arbieto2017}. Theorem~\ref{thm:A} provides homeomorphisms
$g_j:Y_j\to Y_j$ with zero entropy such that
$$
d_{GH^0}(f_A,g_j)\longrightarrow0.
$$
By $GH$-stability, passing to a subsequence if necessary,
there exist $\varepsilon_j\to0$ and continuous $\varepsilon_j$-isometries
$h_j:Y_j\to\mathbb T^2$ satisfying $f_A\circ h_j=h_j\circ g_j$.
None of these maps can be surjective, since
monotonicity of entropy under factors would imply
$$
0<h_{\rm top}(f_A)\leq h_{\rm top}(g_j)=0.
$$
Thus, even for this hyperbolic automorphism, the maps
provided by $GH$-stability may have arbitrarily small
distortion and arbitrarily dense image without
being surjective. On the other hand, fix $K,i_0,D>0$ so that the flat torus
belongs to $\mathcal R(2,K,i_0,D)$.
Theorem~\ref{thm:B} guarantees that every homeomorphism
$g:Y\to Y$ sufficiently close to $f_A$ in the
$GH^0$ distance, with $Y\in\mathcal R(2,K,i_0,D)$, has $f_A$
as a factor and satisfies
$$
h_{\rm top}(g)\geq
\log\left(\frac{3+\sqrt5}{2}\right).
$$
Consequently, for any sequence of homeomorphisms
$g_j:Y_j\to Y_j$ with zero entropy converging to $f_A$
in the $GH^0$ distance, we have $Y_j\notin\mathcal R(2,K,i_0,D)$
for all sufficiently large $j$. The zero-entropy
approximation in Theorem~\ref{thm:A} is therefore
incompatible with remaining in a fixed geometric class
of this type.
\end{example}

\bibliographystyle{amsplain}
\bibliography{References}
\end{document}